\documentclass{amsart}
\usepackage{amsfonts,amsmath,amssymb,graphicx,graphics,color,
lscape,comment,amsbsy,mathrsfs}

\numberwithin{equation}{section}

\newtheorem{theorem}{Theorem}
\numberwithin{theorem}{section}
\newtheorem{lemma}{Lemma}
\numberwithin{lemma}{section}
\newtheorem{prop}{Proposition}
\numberwithin{prop}{section}
\newtheorem{corol}{Corollary}
\numberwithin{corol}{section}
\newtheorem{remark}{Remark}
\numberwithin{remark}{section}

\numberwithin{defi}{section}

\numberwithin{exe}{section}

\linespread{1.1}

\newcommand{\N}{\mathbb{N}}

\newcommand{\UU}{U}

\newcommand{\notthis}[1]{}

\title[An Inversion Formula]{\textbf{An Inversion Formula with Hypergeometric polynomials and Application to Singular Integral Operators}}
\author{R. Nasri(*), A. Simonian(*) and F. Guillemin (**)}
\address{Address:  
(*) Orange Labs, OLN/NMP, Orange Gardens, 
44 avenue de la République, CS 50010, 92326 Chatillon Cedex, France
France (**) Orange Labs Networks Lannion, 2 avenue Pierre Marzin, 22307 Lannion Cedex, Lannion, France}
\email{[ridha.nasri, alain.simonian, fabrice.guillemin]@orange.com}

\begin{document}

\date{Version of \today}

\begin{abstract}

Given complex parameters $x \notin \mathbb{R}^- \cup \{1\}$ and $\nu$, 
$\mathrm{Re}(\nu) < 0$, and the space $\mathscr{H}_0$ of entire  functions in $\mathbb{C}$ vanishing at $0$, we consider the family of integro-differential operators 
$\mathfrak{L} = c_0 \cdot \delta \circ \mathfrak{M}$
with constant $c_0 = \nu(1-\nu)x/(1-x)$, 
$\delta = z \, \mathrm{d}/\mathrm{d}z$ and integral operator 
$\mathfrak{M}$ defined by
$$
\mathfrak{M}f(z) = \int_0^1 e^{- \frac{z}{x}t^{-\nu}(1-(1-x)t)} \, 
f \left ( \frac{z}{x} \, t^{-\nu}(1-t) \right ) \, \frac{\mathrm{d}t}{t}, 
\qquad z \in \mathbb{C},
$$
for all $f \in \mathscr{H}_0$. Inverting $\mathfrak{L}$ or $\mathfrak{M}$ proves equivalent to solve a singular Volterra equation of the first kind. 

The inversion of linear operator $\mathfrak{L}$ on $\mathscr{H}_0$ leads us to derive a new class of linear inversion formulas $T = A(x,\nu) \cdot S \Leftrightarrow S = B(x,\nu) \cdot T$ between sequences $S = (S_n)_{n \in \mathbb{N}^*}$ and $T = (T_n)_{n \in \mathbb{N}^*}$, where the infinite lower-triangular matrix $A(x,\nu)$ and its inverse $B(x,\nu)$ involve Hypergeometric polynomials $F(\cdot)$, namely
$$
	\left\{
	\begin{array}{ll}
	A_{n,k}(x,\nu) = \displaystyle (-1)^k\binom{n}{k}F(k-n,-n\nu;-n;x),
  \\ \\
	B_{n,k}(x,\nu) = \displaystyle (-1)^k\binom{n}{k}F(k-n,k\nu;k;x)
	\end{array} \right.
$$
for $1 \leqslant k \leqslant n$. Functional relations between the ordinary 
(resp. exponential) generating functions of the related sequences $S$ and $T$ are also given. These functional relations finally enable us to derive the integral representation
$$
\mathfrak{L}^{-1}f(z) = \frac{1-x}{2i\pi x} \, e^{z} 
\int_{(0+)}^1 \frac{e^{-xtz}}{t(1-t)} \, 
f \left ( xz \, (-t)^{\nu}(1-t)^{1-\nu} \right ) \, \mathrm{d}t,
\quad z \in \mathbb{C},
$$
for the inverse $\mathfrak{L}^{-1}$ of operator $\mathfrak{L}$ on 
$\mathscr{H}_0$, where the integration contour encircles the point 0 in the complex plane.

\end{abstract}

\maketitle 


\section{Introduction}


To determine the inverse of an integro-differential operator acting on entire functions in $\mathbb{C}$, we address a new class of linear inversion formulas with coefficients involving Hypergeometric polynomials. After an overview of the state-of-the-art in the associated fields, we then summarize our main contributions.

\subsection{Motivation}
\label{IM}
Consider the following problem:

\textit{\textbf{let constants $x \in \; ]0,1[$, 
$\nu < 0$ and the function $\mathfrak{R}$ defined by}}
\begin{equation}
\mathfrak{R}(\zeta) = \left ( 1 - \zeta \right )^{-\nu}
\left ( 1 - (1-x) \, \zeta \right )^{\nu - 1}, 
\qquad \zeta \in [0,1].
\label{DefRR}
\end{equation}
\textit{\textbf{Let $\mathscr{H}_0$ be the linear space of entire functions in $\mathbb{C}$ vanishing at $z = 0$ and define the integro-differential operator 
$\mathfrak{L}:\mathscr{H}_0 \rightarrow \mathscr{H}_0$ by}}
\begin{equation}
\mathfrak{L}f(z) = \int_0^1 
\biggl[ \left ( 1 + z \mathfrak{R}(\zeta) \right ) 
\, f(\zeta \, \mathfrak{R}(\zeta) \cdot z) - c \, z \, 
\mathfrak{R}(\zeta) \,  
f'(\zeta \, \mathfrak{R}(\zeta) \cdot z) \biggr ] 
e^{- \mathfrak{R}(\zeta) \cdot z} \, \mathrm{d}\zeta 
\label{DefTT}
\end{equation}
\textit{\textbf{for all $z \in \mathbb{C}$, where $f'$ denotes the derivative of $f \in \mathscr{H}_0$ and with the constant $c$ in the integrand equal to}}
$$
c = \frac{1-\nu x}{1-x}.
$$

\textit{\textbf{Given $K \in \mathscr{H}_0$, solve the equation}}
\begin{equation}
\mathfrak{L} \, E^*(z) = K(z), 
\qquad z \in \mathbb{C},
\label{EI0}
\end{equation}
\textit{\textbf{for the unknown $E^* \in \mathscr{H}_0$.}}

The operator $\mathfrak{L} = \mathfrak{L}_{x,\nu}$ depends on the set of parameters $U$, $x$ and $\nu$; solving equation (\ref{EI0}) for such parameters is thus equivalent to prove that this operator from $\mathscr{H}_0$ to itself is onto. This inversion problem has been motivated by the resolution of an integral equation arising from Queuing Theory \cite{GQSN18}.

As detailed in the course of this paper, the following \textbf{Properties (I)} and \textbf{(II)} for operator $\mathfrak{L}$ and the associated equation 
(\ref{EI0}) can be respectively  outlined:

\textbf{(I) Operator Factorization: operator $\mathfrak{L}$ can be factored as}
\begin{equation}
\mathfrak{L} =  \frac{x\nu(1-\nu)}{1-x} \cdot \delta \circ \mathfrak{M}
\label{DefTTbis}
\end{equation}
\textbf{where $\delta = z \, \mathrm{d}/\mathrm{d}z$ and $\mathfrak{M}$ is the integral operator defined by}
\begin{equation}
\mathfrak{M}f(z) = 
\int_0^1 e^{- \frac{z}{x}t^{-\nu}(1-(1-x)t)} \, 
f \left ( \frac{z}{x} \, t^{-\nu}(1-t) \right ) \, \frac{\mathrm{d}t}{t}, 
\qquad z \in \mathbb{C},
\label{DefSS}
\end{equation}
\textbf{for all $f \in \mathscr{H}_0$.} Using (\ref{DefTTbis}), solving 
(\ref{EI0}) is therefore equivalent to solve the integral equation 
\begin{equation}
\mathfrak{M}E^* = K_1
\label{DefK1}
\end{equation}
with right-hand side
$$
K_1(z) = \frac{1-x}{\nu(1-\nu)x} \cdot \int_0^z \frac{K(\zeta)}{\zeta} \, \mathrm{d}\zeta, \qquad z \in \mathbb{C},
$$
where $K_1 \in \mathscr{H}_0$ as soon as $K \in \mathscr{H}_0$; specifically, integral equation $\mathfrak{M}E^* = K_1$ can be recast into the Volterra equation 
\begin{equation}
\int_0^{\left ( \frac{\widehat{\tau}}{x} \right ) z} 
\Psi \left ( z, \frac{x\xi}{z}\right ) \, E^*(\xi) \, \mathrm{d}\xi = 
\frac{z}{x} \cdot K_1(z), \qquad z \in \mathbb{C},
\label{Fredh0}
\end{equation}
for some constant $\widehat{\tau}$ and a kernel $\Psi(z,\tau)$ expressed in terms of the two solutions $t = \theta_\pm(\tau) \in [0,1]$ to the implicit equation $t^{-\nu}(1-t) = \tau$. As $\Psi(z,\tau)$ has an integrable singularity of order
\begin{equation}
\Psi(z,\tau) = O \left ( \frac{1}{\sqrt{\widehat{\tau} - \tau}} \right )
\label{Fredh0bis}
\end{equation}
near point $\tau = \widehat{\tau}$, equation (\ref{Fredh0}) with singularity (\ref{Fredh0bis}) therefore belongs to the class of singular Volterra integral equations of the first kind;

\textbf{(II) Reduction to an Infinite Linear System: power series expansions}
\begin{equation}
E^*(z) = \sum_{\ell = 1}^{+\infty} E_\ell \, \frac{z^\ell}{\ell!}, \; \; \; \; 
K(z) = \sum_{b = 1}^{+\infty} (-1)^b K_b \, \frac{z^b}{b!}, 
\qquad z \in \mathbb{C},
\label{defE*}
\end{equation}
\textbf{for a solution $E^* \in \mathscr{H}_0$ and the given 
$K \in \mathscr{H}_0$ reduce the resolution of (\ref{EI0}) to that of the infinite lower-triangular linear system}
\begin{equation}
\forall \, b \in \mathbb{N}^*, \qquad 
\sum_{\ell = 1}^b (-1)^\ell \binom{b}{\ell} \, 
Q_{b,\ell} \, E_{\ell} = K_b,  
\label{T0}
\end{equation}
\textbf{with unknown $E_\ell$, $\ell \in \mathbb{N}^*$, and where the coefficient matrix $Q = (Q_{b,\ell})_{b, \ell \in \mathbb{N}^*}$, on account of the specific function $\mathfrak{R}$ introduced in (\ref{DefRR}), is given by}
\begin{equation}
Q_{b,\ell} = - \frac{\Gamma(b)\Gamma(1-b\nu)}{\Gamma(b-b\nu)} \, 
\frac{x^{1-b}}{1-x} \; F(\ell - b,-b \nu;-b;x), 
\qquad 1 \leqslant \ell \leqslant b.
\label{Q0}
\end{equation}
In (\ref{Q0}), $\Gamma$ is the Euler Gamma function and 
$F(\alpha,\beta;\gamma;\cdot)$ denotes the Gauss Hypergeometric function with complex parameters $\alpha$, $\beta$, $\gamma \notin -\mathbb{N}$. Recall that $F(\alpha,\beta;\gamma;\cdot)$ reduces to a polynomial with degree $-\alpha$ 
(resp. $-\beta$) if $\alpha$ (resp. $\beta$) equals a non positive integer; expression (\ref{Q0}) for coefficient $Q_{b,\ell}$ thus involves a Hypergeometric polynomial with degree $b - \ell$ in both arguments $x$ and $\nu$.

The diagonal coefficients $Q_{b,b}$, $b \geqslant 1$, are non-zero so that lower-triangular system (\ref{T0}) has a unique solution;  equivalently, this proves the uniqueness of the solution $E^* \in \mathscr{H}_0$ to (\ref{EI0}). To make this solution explicit in terms of parameters, write system (\ref{T0}) equivalently as 
\begin{equation}
\forall \, b \in \mathbb{N}^*, \qquad 
\sum_{\ell = 1}^b A_{b,\ell}(x,\nu) \, E_\ell = \widetilde{K}_b,
\label{T0BIS}
\end{equation}
with the reduced right-hand side $(\widetilde{K}_b)$ defined by 
$$
\widetilde{K}_b = - \, 
\frac{\Gamma(b-b\nu)}{\Gamma(b)\Gamma(1-b\nu)} (1-x)x^{b-1}\cdot K_b, 
\qquad b \geqslant 1,
$$ 
and with matrix $A(x,\nu) = (A_{b,\ell}(x,\nu))$ given by
\begin{equation}
A_{b,\ell}(x,\nu) = (-1)^\ell \binom{b}{\ell} F(\ell-b,-b\nu;-b;x), 
\qquad 1 \leqslant \ell \leqslant b.
\label{T0TER}
\end{equation}
As shown in this paper, the linear relation (\ref{T0BIS}) to which initial system (\ref{T0}) has been recast can be explicitly inversed for any right-hand side $(K_b)_{b \in \mathbb{N}^*}$, the inverse matrix 
$B(x,\nu) = A(x,\nu)^{-1}$ involving also Hypergeometric polynomials. This consequently solves system (\ref{T0}) explicitly, hence integral equation 
(\ref{EI0}).

Beside the initial motivation stemming from integral equation 
(\ref{EI0}), the remarkable structure of the inversion scheme 
$B(x,\nu) = A(x,\nu)^{-1}$ obtained in this paper brings a new contribution to the realm of linear inversion formulas, namely infinite lower-triangular matrices with coefficients involving Hypergeometric polynomials.

\subsection{State-of-the-art}
\textbf{a)} As mentioned above, equations (\ref{EI0}) or (\ref{DefK1}) can be recast in the form (\ref{Fredh0}) of a singular Volterra integral equation of the first kind. We thus first review known results for this class of integral equations to which our initial problem relates. Given the constant 
$\alpha \in \; ]0,1[$, the typical case of singular equations is given by the classical Abel equation
$$
\int_0^z \frac{E(\xi)}{(z-\xi)^\alpha} \, \mathrm{d}\xi = \kappa(z), 
\qquad z \in [0,r],
$$
on some real interval $[0,r]$, for the unknown function $E$ and some given function $\kappa$ (\cite{BIT95}, Chap.7; \cite{EST00}, Chap.2; \cite{GOR91}, Chap.1). If $\kappa$ is absolutely continuous on $[0,r]$, then Abel equation has a unique solution $E \in \mathrm{L}^1[0,r]$ given by
\begin{align}
E(z) = & \, \frac{\sin(\pi\alpha)}{\pi} \cdot \frac{\mathrm{d}}{\mathrm{d}z} 
\left [ \int_0^z \frac{\kappa(\xi)}{(z-\xi)^{1-\alpha}} \, 
\mathrm{d}\xi \right ]
\nonumber \\
= & \, \frac{\sin(\pi\alpha)}{\pi} 
\left [ \frac{\kappa(0)}{z^{1-\alpha}} + 
\int_0^z \frac{\kappa'(\xi)}{(z-\xi)^{1-\alpha}} \, 
\mathrm{d}\xi \right ], 
\qquad z \in [0,r].
\label{SolAbel}
\end{align}
This solution extends to a complex variable $z \in \mathbb{C}$ pertaining to a neighborhood of point $0$ where function $\kappa$ is assumed to be analytic; the solution $E$ is analytic in a neighborhood of $z = 0$ if condition $\kappa(0) = 0$ holds, that is, $\kappa \in \mathscr{H}_0$. The presence of the derivative $\kappa'$ in 
(\ref{SolAbel}) precludes, however, the continuity of the solution $E$ with respect to the given function $\kappa$ (Abel equation is an ill-posed problem). 

Given a compact $\Omega \subset \mathbb{C}$, general singular operators 
$\mathfrak{J}:E \mapsto \mathfrak{J}E$ given by
$$
\mathfrak{J}E(z) = \int_0^z N(z,\xi)E(\xi) \, \mathrm{d}\xi, 
\qquad z \in \Omega,
$$
where the kernel $N$ verifies
$$
\vert N(z,\xi) \vert \leqslant \frac{M}{\vert z - \xi \vert^\alpha}, 
\qquad z,\xi \in \Omega, \; z \neq \xi,
$$
and some constant $M > 0$, are continuous (even compact) operators on 
$\mathscr{C}^0[\Omega]$ 
(\cite{KRESS14}, Theorem 2.29). No general results are available, however, on the inverse of $\mathfrak{J}$ on some subspace of 
$\mathscr{C}^0[\Omega]$ and, as exemplified above, $\mathfrak{J}^{-1}$ is unbounded if it exists.

As a generalization to the standard formula (\ref{SolAbel}), we will show in this paper how an explicit integral representation for the solutions of either singular equation (\ref{EI0}), (\ref{DefK1}) or (\ref{Fredh0}) can be nevertheless obtained. 

\textbf{b)} We now describe the known classes of linear inversion formulas for the resolution of infinite linear systems. Most of these inversion formulas have been motivated by problems from pure Combinatorics together with the determination of remarkable relations on special functions. 
Given complex sequences $(a_j)_{j \in \mathbb{Z}}$, $(b_j)_{j \in \mathbb{Z}}$ and $(c_j)_{j \in \mathbb{Z}}$ with $c_j \neq c_k$ for $j \neq k$, it has been shown \cite{Kratten96} that the lower triangular matrices $A$ and $B$ with coefficients
\begin{equation}
A_{n,k} = 
\frac{\displaystyle \prod_{j=k}^{n-1}(a_j + b_j c_k)}
{\displaystyle \prod_{j=k+1}^{n}(c_j - c_k)} , 
\qquad
B_{n,k} = \frac{a_k + b_kc_k}{a_n + b_nc_n} \cdot 
\frac{\displaystyle \prod_{j=k+1}^{n}(a_j + b_j c_n)}
{\displaystyle \prod_{j=k}^{n-1}(c_j - c_n)}
\label{Kratt0}
\end{equation}
for $k \leqslant n$, are inverses. The proof of (\ref{Kratt0}) relies on the existence of linear operators $\mathscr{U}$, $\mathscr{V}$ on the linear space of formal Laurent series such that 
$$
\mathscr{U} f_k(z) = c_k \cdot \mathscr{V} f_k(z), \qquad k \in \mathbb{Z},
$$
where $f_k(z) = \sum_{n \geqslant k} A_{n,k} z^n$; the partial Laurent series 
$g_n(z) = \sum_{k \leqslant n} B_{n,k} z^{-k}$, $n \in \mathbb{Z}$, for the inverse inverse $B = A^{-1}$ can then be expressed in terms of the adjoint operator $\mathscr{V}^*$ of $\mathscr{V}$. A generalization of inverse relation 
(\ref{Kratt0}) to the multi-dimensional case when 
$A = (A_{\mathbf{n},\mathbf{k}})$ with indexes $\mathbf{n}$, 
$\mathbf{k} \in \mathbb{Z}^r$ for some integer $r$ has also been provided in \cite{Schlo97}. As an application, the obtained relations bring summation formulas for multidimensional basic hypergeometric series. 

The lower triangular matrix $A = A(x,\nu)$ introduced in 
(\ref{T0BIS})-(\ref{T0TER}), however, cannot be cast into the specific product form (\ref{Kratt0}) for its inversion: in fact, such a product form for the coefficients of $A(x,\nu)$ should involve the $n-k$ zeros $c_{j,n,k}$, 
$k \leqslant j \leqslant n-1$ of the Hypergeometric polynomial 
$F(k-n,-n\nu;-n;x)$, $k \leqslant n$, in variable $x$; but such zeros depend on all indexes $j$, $n$ and $k$, which precludes the use of a factorization such as (\ref{Kratt0}) where sequences with one index only intervene. In this paper, using functional operations on specific generating series related to its coefficients, we will show how matrix $A(x,\nu)$ can be nevertheless  inverted through a fully explicit procedure.

\subsection{Paper contribution}
The main contributions of this paper can be summarized as follows:

$\bullet$ in Section \ref{LTS}, we first establish an inversion criterion for a class of infinite lower-triangular matrices, enabling us to state the inversion formula for the considered class of lower triangular matrices 
(\ref{T0TER}) with Hypergeometric polynomials;

$\bullet$ in Section \ref{GF}, functional relations are obtained for ordinary 
(resp. exponential) generating functions of sequences related by the inversion formulas;

$\bullet$ we end by an application section where we first prove the reduction of integral equation (\ref{EI0}) to the infinite linear system (\ref{T0}) with coefficients depending on Hypergeometric polynomials (\textbf{Reduction Property II} asserted above). Applying the general results of the previous sections, the linear system (\ref{T0}) is fully solved (Section \ref{SecA1}). We next justify the factorization property for the integro-differential operator $\mathfrak{L}$ (\textbf{Factorization Property I} formulated above). The functional relations for exponential generating functions eventually provide us with an integral representation of the inverse $\mathfrak{L}^{-1}$ of operator 
$\mathfrak{L}$ in space $\mathscr{H}_0$ (Section \ref{SecA2}); such a representation reads as an integral in the complex plane with a finite contour encircling the origin 0.


\section{Lower-Triangular Systems}
\label{LTS}


Let $(a_m)_{m\in \N}$ and $(b_m)_{m\in \N}$ be complex sequences such that 
$a_0 = b_0 =1$ and denote by $f(x)$ and $g(x)$ their respective exponential generating series, i.e.,
\begin{equation}
f(x)=\sum_{m=0}^{+\infty}\frac{a_m}{m!} \, x^m, 
\qquad 
g(x)=\sum_{m=0}^{+\infty}\frac{b_m}{m!} \, x^m.
\label{DefFG}
\end{equation}
In the following, we will use the notation $[x^n] f(x)$ for the coefficient of 
$x^n$, $n \in \mathbb{N}$, in power series $f(x)$. For all $x \in \mathbb{C}$, define the infinite lower-triangular matrices 
$A(x) = (A_{n,k}(x))_{n,k \in \N^*}$ and 
$B(x) = (B_{n,k}(x))_{n,k \in \N^*}$ by
\begin{equation}
	\left\{
	\begin{array}{ll}
	A_{n,k}(x) = \displaystyle 
	(-1)^k \binom{n}{k}\sum_{m=0}^{n-k}\frac{(k-n)_m \, a_m}{m!} \, x^m,
	\\ \\
	B_{n,k}(x) = \displaystyle 
	(-1)^k \binom{n}{k}\sum_{m=0}^{n-k}\frac{(k-n)_m \, b_m}{m!} \, x^m,
	\end{array} \right.
\label{DefAB}
\end{equation}
where $(c)_m$, $c \in \mathbb{C}$, $m \in \mathbb{N}^*$, denotes the Pochhammer symbol (\cite{NIST10}, §5.2(iii)) with $(c)_0 = 1$. From definition (\ref{DefAB}), matrices $A(x)$ and $B(x)$ have diagonal elements equal to $A_{k,k}(x) = B_{k,k}(x) = (-1)^k$, 
$k \in \mathbb{N}^*$, and are thus invertible. 

\subsection{An inversion criterion}
We first state the following inversion criterion.

\begin{prop}
\textbf{Matrices $A(x)$ and $B(x)$ are inverse of each other if and only if the condition}
	\begin{equation}
	[x^{n-k}]f(-x)g(x)=\mathbf{1}_{n-k}, 
	\qquad 1 \leqslant k \leqslant n,
	\label{Invers0}
	\end{equation}
\textbf{on functions $f$ and $g$ holds (with $\mathbf{1}_{n-k} = 1$ if $n = k$ and 0 otherwise).}
\label{theoMaininversionR}
\end{prop}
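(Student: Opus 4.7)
The approach is to compute the entries of $A(x)B(x)$ explicitly and recognize them as coefficients of the formal power series $f(-x)g(x)$, from which both directions of the equivalence drop out at once.

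The first step is to put the matrix entries in a workable form. Since $(k-n)_m/m! = (-1)^m \binom{n-k}{m}$ for $0\leqslant m\leqslant n-k$, one may write
$$
A_{n,k}(x) = (-1)^k\binom{n}{k}h_{n-k}(-x), \qquad B_{n,k}(x) = (-1)^k\binom{n}{k}\tilde h_{n-k}(-x),
$$
with $h_N(x) = \sum_{m=0}^N\binom{N}{m}a_mx^m$ and $\tilde h_N(x) = \sum_{m=0}^N\binom{N}{m}b_mx^m$. A routine exchange of summations identifies the exponential generating series of the $h_N$ and $\tilde h_N$ as $e^y f(xy)$ and $e^y g(xy)$ respectively; this is the fact that will drive the calculation.

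The main step is then to expand $(A(x)B(x))_{n,\ell} = \sum_{k=\ell}^n A_{n,k}(x)B_{k,\ell}(x)$ in terms of the $h_N$ and $\tilde h_N$, apply the identity $\binom{n}{k}\binom{k}{\ell} = \binom{n}{\ell}\binom{n-\ell}{k-\ell}$, and reindex via $j = k-\ell$, $N = n-\ell$, arriving at
$$
(A(x)B(x))_{n,\ell} = \binom{n}{\ell}\sum_{j=0}^N(-1)^j\binom{N}{j}h_{N-j}(-x)\tilde h_j(-x).
$$
The inner sum is $N!$ times the $y^N$-coefficient of the product of EGFs $e^y f(-xy)$ (coming from $(h_N(-x))_N$) and $e^{-y}g(xy)$ (coming from $((-1)^N\tilde h_N(-x))_N$). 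That product collapses to $f(-xy)g(xy)$, whose $y^N$-coefficient equals $x^N[z^N]f(-z)g(z)$, yielding
$$
(A(x)B(x))_{n,\ell} = \binom{n}{\ell}(n-\ell)!\,x^{n-\ell}\,[z^{n-\ell}]f(-z)g(z).
$$

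The equivalence is now immediate. The right-hand side is a monomial in $x$ whose coefficient is $\binom{n}{\ell}(n-\ell)!\,[z^{n-\ell}]f(-z)g(z)$, so it equals $\mathbf{1}_{n-\ell}$ identically in $x$ if and only if condition (\ref{Invers0}) holds. Since $A(x)$ is lower-triangular with nonvanishing diagonal $(-1)^k$, it is invertible and $A(x)B(x) = I$ is equivalent to $B(x) = A(x)^{-1}$. I expect the main obstacle to be the bookkeeping that produces the collapse $e^y\cdot e^{-y} = 1$: the sign $(-1)^j$ inherited from the original $(-1)^k$ must be carried correctly through the reindexing, for it is precisely this sign that yields the tidy factorization $f(-xy)g(xy)$ and makes the criterion take its symmetric form.
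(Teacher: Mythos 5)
Your proof is correct, and it takes a genuinely different route from the paper's. The paper expands $C_{n,k}(x)=\sum_{\ell}A_{n,\ell}(x)B_{\ell,k}(x)$ directly, exchanges summations, and is then forced to evaluate the partial alternating sum $\sum_{r}\frac{(-1)^r}{(r-m)!(n-k-r-m')!}$; this is done via a separate technical lemma on $D_N(\lambda,\mu)=\sum_{r=0}^{N-1}\frac{(-1)^r}{\Gamma(1+r-\lambda)\Gamma(1-r+\mu)}$, whose proof in the appendix uses Gauss's evaluation of $F(\alpha,\beta;\gamma;1)$, the reflection formula for $\Gamma$ and $\psi$, and an analytic continuation argument to cover the case $\lambda=\mu$. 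You bypass all of that: by writing $A_{n,k}(x)=(-1)^k\binom{n}{k}h_{n-k}(-x)$ with $h_N(x)=\sum_m\binom{N}{m}a_mx^m$, recognizing the auxiliary EGFs $\sum_N h_N(x)y^N/N!=e^yf(xy)$, and reindexing with $\binom{n}{k}\binom{k}{\ell}=\binom{n}{\ell}\binom{n-\ell}{k-\ell}$, the inner sum becomes a binomial convolution whose EGF is $e^yf(-xy)\cdot e^{-y}g(xy)=f(-xy)g(xy)$; the cancellation $e^ye^{-y}=1$ plays exactly the role of the paper's Lemma on $D_N$, but at the level of formal power series with no Gamma-function machinery. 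Both arguments land on the same closed form $\frac{n!}{\ell!}\,x^{n-\ell}\,[z^{n-\ell}]f(-z)g(z)$ for the product's entries, from which the equivalence follows (in both your write-up and the paper's, the ``only if'' direction tacitly reads the identity as holding for all $x$, or at least some $x\neq 0$, since at $x=0$ the product is the identity regardless of $f,g$ — a shared, harmless looseness). Your closing remark that left-inverse implies two-sided inverse for lower-triangular matrices with nonvanishing diagonal is the right justification for passing from $A(x)B(x)=I$ to mutual inversion.
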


\noindent
The proof of Proposition \ref{theoMaininversionR} requires the following technical lemma whose proof is deferred to Appendix \ref{A1}.

\begin{lemma}
	\textbf{Let $N \in \N^*$ and complex numbers $\lambda$, $\mu$. Defining}
	$$
	D_{N}(\lambda,\mu) = 
	\sum_{r=0}^{N-1}\frac{(-1)^r}{\Gamma(1+r-\lambda)\Gamma(1-r+\mu)},
	$$
	\textbf{we then have}
\begin{equation}
	D_{N}(\lambda,\mu) = 
	\left \{
	\begin{array}{ll}
	\displaystyle 
	\frac{1}{\mu - \lambda} \left[ \frac{1}{\Gamma(-\lambda)\Gamma(1+\mu)} - 
	\frac{(-1)^N}{\Gamma(N-\lambda)\Gamma(1-N+\mu)} \right], \, \mu \neq \lambda
	\\ \\
	\displaystyle \frac{\sin(\pi\lambda)}{\pi} \left[ \psi(-\lambda)-\psi(N-\lambda) \right], 
	~ \qquad \qquad \qquad \qquad \quad \, \mu = \lambda,
	\end{array} \right.
\label{Sn}
\end{equation}
\textbf{where $\psi = \Gamma'/\Gamma$.}
	\label{lemm1}
\end{lemma}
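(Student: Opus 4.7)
The plan is to prove the identity for $D_N(\lambda,\mu)$ by exhibiting the summand as a telescoping difference. Specifically, I look for a sequence $(w_r)$ such that
$$
w_{r+1} - w_r = \frac{(-1)^r}{\Gamma(1+r-\lambda)\Gamma(1-r+\mu)}.
$$
The natural candidate is $w_r = \alpha \cdot \frac{(-1)^r}{\Gamma(r-\lambda)\Gamma(1-r+\mu)}$ for a suitable constant $\alpha$. Applying the functional equation $\Gamma(z+1) = z\,\Gamma(z)$ twice (to $\Gamma(r+1-\lambda)$ and to $\Gamma(1-r+\mu) = (\mu-r)\Gamma(\mu-r)$) and putting the resulting two fractions over a common denominator, the numerator collapses to $\mu - \lambda$ (independently of $r$). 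This yields
$$
w_{r+1} - w_r = -\alpha(\mu-\lambda)\cdot\frac{(-1)^r}{\Gamma(1+r-\lambda)\Gamma(1-r+\mu)},
$$
so the choice $\alpha = -1/(\mu-\lambda)$ works whenever $\mu \neq \lambda$.

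Summing from $r=0$ to $r=N-1$ then gives $D_N(\lambda,\mu) = w_N - w_0$, which is exactly the first line of \eqref{Sn}. This is the substantive computation; the rest is bookkeeping.

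For the case $\mu = \lambda$, I plan to obtain the formula by passing to the limit $\mu \to \lambda$ in the formula just derived. One first checks that the bracket in the first line of \eqref{Sn} vanishes at $\mu = \lambda$: using the Euler reflection formula $\Gamma(z)\Gamma(1-z) = \pi/\sin(\pi z)$ with $z=-\lambda$ and with $z = N-\lambda$ (together with $\sin(\pi(N-\lambda)) = -(-1)^N\sin(\pi\lambda)$), both reciprocals equal $-\sin(\pi\lambda)/\pi$, so the bracket is $0/0$. L'Hôpital's rule then reduces the problem to computing the $\mu$-derivative at $\mu=\lambda$ of the bracket, via $\frac{d}{d\mu}\frac{1}{\Gamma(1+\mu)} = -\psi(1+\mu)/\Gamma(1+\mu)$ and the analogous formula for the second term.

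The last step I expect to need a little care is converting the answer, which at this stage reads
$$
\frac{\sin(\pi\lambda)}{\pi}\bigl[\psi(1+\lambda)-\psi(1-N+\lambda)\bigr],
$$
into the form $\frac{\sin(\pi\lambda)}{\pi}[\psi(-\lambda)-\psi(N-\lambda)]$ stated in the lemma. This follows from the digamma reflection formula $\psi(1-z) - \psi(z) = \pi\cot(\pi z)$ applied at $z=-\lambda$ and at $z=N-\lambda$: because $\cot$ has period $\pi$, the two $\pi\cot(\pi\lambda)$ contributions cancel in the difference, leaving exactly $\psi(-\lambda)-\psi(N-\lambda)$. The only real obstacle is spotting the correct telescoping ansatz at the start; everything after that is an application of standard identities for $\Gamma$ and $\psi$.
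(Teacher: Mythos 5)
Your proof is correct, and it takes a genuinely different route from the paper's. The paper proves the $\mu\neq\lambda$ case by first restricting to $\Re(\mu)>\Re(\lambda)$ so that the sum can be written as a difference of two convergent infinite series, recognizing each as a Gauss hypergeometric value $F(1,\cdot;\cdot;1)$, applying the Gauss summation theorem, and then extending to all $\mu\neq\lambda$ by analytic continuation; the case $\mu=\lambda$ is handled separately and directly by reducing the summand to $1/(r-\lambda)$ via the reflection formula and invoking the partial-fraction expansion of $\psi$. Your telescoping ansatz $w_r=\frac{1}{\lambda-\mu}\cdot\frac{(-1)^r}{\Gamma(r-\lambda)\Gamma(1-r+\mu)}$ does indeed satisfy $w_{r+1}-w_r=\frac{(-1)^r}{\Gamma(1+r-\lambda)\Gamma(1-r+\mu)}$ (the identity $1/\Gamma(z)=z/\Gamma(z+1)$ holds between entire functions, so no exceptional values of $\lambda,\mu$ need to be excluded), which gives the first line of \eqref{Sn} in one stroke for \emph{all} complex $\lambda\neq\mu$, with no convergence restriction, no hypergeometric summation, and no continuation argument. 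This is a cleaner and more elementary derivation of the main case. Your treatment of $\mu=\lambda$ by L'H\^opital is also sound: $D_N$ is entire in $(\lambda,\mu)$, the bracket vanishes at $\mu=\lambda$ by the two reflection-formula evaluations you cite, and the $\mu$-derivative (interpreting $-\psi(z)/\Gamma(z)$ as the entire function $\frac{\mathrm{d}}{\mathrm{d}z}(1/\Gamma(z))$ where $\psi$ has poles) yields $\frac{\sin(\pi\lambda)}{\pi}[\psi(1+\lambda)-\psi(1-N+\lambda)]$, which the digamma reflection formula converts to the stated form since the two $\pi\cot(\pi\lambda)$ terms cancel. The paper's separate direct computation of $D_N(\lambda,\lambda)$ is marginally more self-contained for that subcase, but your limit argument is equally rigorous. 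Overall your approach trades the paper's special-function machinery for an elementary antidifference, which is a genuine simplification.
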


\noindent
We now proceed with the justification of Proposition \ref{theoMaininversionR}.

\begin{proof}
$A(x)$ and $B(x)$ being lower-triangular, so is their product 
$C(x) = A(x)B(x)$. After definition (\ref{DefAB}), 
the coefficient 
$C_{n,k}(x) = \sum_{\ell \geqslant 1} A_{n,\ell}(x)B_{\ell,k}(x)$, 
$1 \leqslant k \leqslant n$ (where the latter sum over index $\ell$ is actually finite), of matrix $C(x)$ reads 
\begin{align}
C_{n,k}(x) = & \, 
\sum_{\ell = 1}^{+\infty} (-1)^\ell \, \frac{ n!}{\ell!(n-\ell)!} 
\sum_{m=0}^{n-\ell} \frac{(-1)^m(n-\ell)! \, a_m}{(n-\ell-m)!m!} \, x^m \; \times  
\nonumber \\
& \, (-1)^k \, \frac{\ell!}{k!(\ell-k)!}\sum_{m'=0}^{\ell-k} 
\frac{(-1)^{m'}(\ell-k)! \, b_{m'}}{(\ell-k-m')!m'!} \, x^{m'}
\nonumber
\end{align}
after writing $(-r)_m = (-1)^m r!/(r-m)!$ for any positive integer $r$, that is,
\begin{equation}
C_{n,k}(x) = (-1)^k \frac{n!}{k!} \sum_{\ell = 1}^{+\infty} (-1)^\ell 
\sum_{m = 0}^{n-\ell} \frac{(-1)^m a_m \, x^m}{m!(n-\ell-m)!} 
\sum_{m' = 0}^{\ell - k} \frac{(-1)^{m'} b_{m'} \, x^{m'}}{m'!(\ell-k-m')!}.
\label{P11}
\end{equation}
Exchanging the summation order in (\ref{P11}) further gives
\begin{align}
C_{n,k}(x) = (-1)^k \frac{n!}{k!} & \, 
\sum_{(m,m') \in \Delta_{n,k}} \frac{(-1)^m a_m \, x^m}{m!}
\frac{(-1)^{m'} b_{m'} \, x^{m'}}{m'!} \; \times
\nonumber \\
& \sum_{k \leqslant \ell \leqslant n}\frac{(-1)^\ell}{(n-\ell-m)!(\ell-k-m')!}
\label{P12}
\end{align}
with the subset 
$\Delta_{n,k} = \{(m,m') \in \mathbb{N}^2, \; m+m' \leqslant n-k\}$ for given 
$k \leqslant n$, and where the latter sum on index $\ell$ can be equivalently written as
\begin{align}
\sum_{k \leqslant \ell \leqslant n}\frac{(-1)^\ell}{(n-\ell-m)!(\ell-k-m')!} 
= & \, \sum_{r = 0}^{n-k} \frac{(-1)^{n-r}}{(r-m)!(n-r-k-m')!} 
\nonumber \\
= & \, (-1)^n \, D_{n-k+1}(m,n-k-m')
\nonumber
\end{align}
with the index change $\ell = n-r$ and the notation of Lemma \ref{lemm1}. The expression (\ref{P12}) for coefficient $C_{n,k}(x)$ consequently reduces to
\begin{align}
C_{n,k}(x) = (-1)^{n+k} \, \frac{n!}{k!} \, 
\sum_{(m,m') \in \Delta_{n,k}} & \frac{(-1)^m a_m \, x^m}{m!}
\frac{(-1)^{m'} b_{m'} \, x^{m'}}{m'!} \; \times
\nonumber \\
& D_{n-k+1}(m,n-k-m')
\label{P13}
\end{align}
and we are left to calculate $D_{n-k+1}(m,n-k-m')$ for all non negative 
$m$ and $m'$, $(m,m') \in \Delta_{n,k}$. By Lemma \ref{lemm1} applied to 
$\lambda = m$ and $\mu = n-k-m'$, we successively derive that:

\begin{itemize}
\item[\textbf{(a)}] if $\mu > \lambda \Leftrightarrow m + m' < n-k$, formula 
(\ref{Sn}) entails
\begin{align}
& D_{n-k+1}(m,n-k-m') \; = 
\nonumber \\ 
& \frac{1}{n-k-(m+m')} \left [ \frac{1}{\Gamma(-m)\Gamma(1+n-k-m')} - 
\frac{(-1)^{n-k+1}}{\Gamma(n-k+1-m)\Gamma(-m')} \right ];
\nonumber
\end{align}
as $\Gamma(-m) = \Gamma(-m') = \infty$ for all non negative integers 
$m \geqslant 0$ and $m' \geqslant 0$, each fraction of the latter expression vanishes and thus
\begin{equation}
D_{n-k+1}(m,n-k-m') = 0, \qquad m + m' < n-k;
\label{P14}
\end{equation}

\item[\textbf{(b)}] if $\lambda = \mu \Leftrightarrow m + m' = n-k$, formula 
(\ref{Sn}) entails
\begin{equation}
D_{n-k+1}(m,m) = \lim_{\lambda \rightarrow m} \frac{\sin(\pi\lambda)}{\pi} 
\left [ \psi(-\lambda) - \psi(n-k+1-\lambda) \right ].
\label{P15}
\end{equation}
We have $\sin(m \pi) = 0$ while function $\psi$ has a polar singularity at every non positive integer; the limit (\ref{P15}) is therefore indeterminate 
($0 \times \infty$) but this is solved by invoking the reflection formula 
$$
\psi(z) - \psi(1-z) = - \pi \, \cot(\pi \, z), \qquad z \notin - \mathbb{N},
$$ 
for function $\psi$ (\cite{NIST10}, Chap.5, §5.5.4). In fact, applying the latter to $z = -\lambda$ first gives
$
\sin(\pi \lambda) \, \psi(-\lambda) = 
\sin(\pi \lambda) \, \psi(1+\lambda) + \pi \cdot \cos(\pi\lambda)
$
whence
$$
\lim_{\lambda \rightarrow m} \frac{\sin(\pi \lambda)}{\pi} \, \psi(-\lambda) 
= 0 \times \psi(1+m) + (-1)^m = (-1)^m;
$$
besides, the second term $\psi(n-k+1-\lambda)$ in (\ref{P15}) has a finite limit when $\lambda \rightarrow m$ since $m + m' = n - k \Rightarrow m \leqslant n - k$ so that $n-k+1-\lambda$ tends to a positive integer. From 
(\ref{P15}) and the latter discussion, we are left with
\begin{equation}
D_{n-k+1}(m,m) = (-1)^m, \qquad m + m' = n-k.
\label{P16}
\end{equation}
\end{itemize}

In view of items \textbf{(a)} and \textbf{(b)}, identities 
(\ref{P15}) and (\ref{P16}) together reduce expression (\ref{P13}) to
\begin{align}
C_{n,k}(x) = & \; (-1)^{n+k} \, \frac{n!}{k!} \, 
\sum_{m = 0}^{n-k} \frac{(-1)^m a_m \, x^m}{m!} \, (-1)^{n-k-m} 
\frac{b_{n-k-m} \, x^{n-k-m}}{(n-k-m)!} \times (-1)^m 
\nonumber \\
= & \; \frac{n!}{k!} \, \sum_{m = 0}^{n-k} \frac{(-1)^m a_m \, x^m}{m!} \, 
\frac{b_{n-k-m}}{(n-k-m)!}x^{n-k} = \frac{n!}{k!} \; [x]^{n-k}f(-x)g(x)
\nonumber
\end{align}
where $f$ and $g$ denote the exponential generating function of the sequence 
$(a_m)_{m \in \mathbb{N}^*}$ and the sequence $(b_m)_{m \in \mathbb{N}^*}$, respectively. It follows that $C(x) = A(x) B(x)$ is the identity matrix 
$\mathrm{Id}$ if and only if condition (\ref{Invers0}) holds, as claimed. 
\end{proof}

Following the proof of Proposition \ref{theoMaininversionR}, the same arguments extend to the case when the sequences 
$(a_m)_{m \in \mathbb{N}}$ and $(b_m)_{m \in \mathbb{N}}$ associated with matrices $A(x)$ and $B(x)$ also depend on indexes $n, k$, that is, for  sequences $(a_{m;n,k})_{m \in \mathbb{N}}$ and 
$(b_{m;n,k})_{m \in \mathbb{N}}$. Criterion (\ref{Invers0}) for the inversion relation $A(x) B(x) = \mathrm{Id}$ then simply extends to 
$$
[x^{n-k}]f_{n,k}(-x)g_{n,k}(x) = \mathbf{1}_{n-k},
\qquad 1 \leqslant k \leqslant n,
$$
where $f_{n,k}$ (resp. $g_{n,k}$) denotes the exponential generating function of the sequence $(a_{m;n,k})_{m \in \mathbb{N}}$ (resp. 
$(b_{m;n,k)})_{m \in \mathbb{N}}$) for given $n, \, k \in \mathbb{N}^*$. This straightforward generalization of Proposition \ref{theoMaininversionR} will be hereafter invoked to verify the inversion criterion.

\subsection{The inversion formula}
We now formulate the inversion formula for lower-triangular matrices involving Hypergeometric polynomials.

\begin{theorem}
\textbf{Let $x, \nu \in \mathbb{C}$ and define the lower-triangular matrices 
$A(x,\nu)$ and $B(x,\nu)$ by}
	\begin{equation}
	\left\{
	\begin{array}{ll}
	A_{n,k}(x,\nu) = \displaystyle (-1)^k\binom{n}{k}F(k-n,-n\nu;-n;x),
  \\ \\
	B_{n,k}(x,\nu) = \displaystyle (-1)^k\binom{n}{k}F(k-n,k\nu;k;x)
	\end{array} \right.
	\label{DefABxNU}
	\end{equation}
\textbf{for $1 \leqslant k \leqslant n$. The inversion formula}
	\begin{equation}
	T_n = \sum_{k=1}^{n}A_{n,k}(x,\nu)S_k 
	\Longleftrightarrow 
	S_n=\sum_{k=1}^{n}B_{n,k}(x,\nu)T_k, \quad n \in \mathbb{N}^*,
	\label{eq:inversionR}
	\end{equation}
\textbf{holds for any pair of complex sequences 
$(S_n)_{n \in \mathbb{N}^*}$ and $(T_n)_{n \in \mathbb{N}^*}$.}
	\label{PropIn}
\end{theorem}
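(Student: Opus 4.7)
The plan is to invoke the extension of Proposition \ref{theoMaininversionR} flagged in the remark following its proof, where the auxiliary sequences are allowed to depend on the indices $n$ and $k$. Expanding
\[
F(k-n,-n\nu;-n;x) = \sum_{m=0}^{n-k}\frac{(k-n)_m\,(-n\nu)_m}{(-n)_m\,m!}\,x^m
\]
and matching with the template (\ref{DefAB}), one reads off $a_{m;n} = (-n\nu)_m/(-n)_m$ (depending only on $n$) and, analogously from the definition of $B_{n,k}(x,\nu)$, $b_{m;k} = (k\nu)_m/(k)_m$ (depending only on $k$). The corresponding EGFs are the confluent hypergeometric series $f_n(x) = {}_1F_1(-n\nu;-n;x)$ and $g_k(x) = {}_1F_1(k\nu;k;x)$, and the extended criterion reduces the whole theorem to checking
\[
[x^{n-k}]\,f_n(-x)\,g_k(x) = \mathbf{1}_{n-k}, \qquad 1 \leqslant k \leqslant n.
\]

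The diagonal case $n = k$ is immediate. For $n > k$, set $N = n-k$, $u = -n\nu$, $v = k\nu$; a Cauchy product identifies the required coefficient, up to a factor $1/N!$, with
\[
S_N := \sum_{m=0}^{N}(-1)^m\binom{N}{m}\,\frac{(u)_m\,(v)_{N-m}}{(-n)_m\,(k)_{N-m}}.
\]
I would then apply the standard reflection $(a)_{N-m} = (-1)^m(a)_N/(1-a-N)_m$ to both $(v)_{N-m}$ and $(k)_{N-m}$, exploiting the key coincidence $1-k-N = 1-n$ together with the telescoping identity $(1-n)_m/(-n)_m = (n-m)/n$, to rewrite
\[
S_N = \frac{(v)_N}{n\,(k)_N}\sum_{m=0}^{N}\frac{(-N)_m\,(u)_m}{(1-v-N)_m\,m!}\,(n-m).
\]

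Splitting the factor $n-m$ produces two ${}_2F_1$-series at argument $1$, both summable by Chu-Vandermonde: the $n$-part yields $n(u+v)_N/(v)_N$, while the $m$-part, after the shift $m = p+1$ that factors out $-Nu/(1-v-N)$, yields $-Nu(u+v+1)_{N-1}/[(1-v-N)(v)_{N-1}]$. Using $(v)_N = (v+N-1)(v)_{N-1}$ and $1-v-N = -(v+N-1)$, these two contributions collapse to
\[
S_N = \frac{(u+v+1)_{N-1}}{n\,(k)_N}\,\bigl[\,n(u+v)-Nu\,\bigr],
\]
and the arithmetic $u+v = (k-n)\nu = -N\nu$ combined with $u = -n\nu$ gives $n(u+v)-Nu = -nN\nu + nN\nu = 0$. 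The main obstacle is the Pochhammer bookkeeping in the first two steps that brings $S_N$ into a pair of Chu-Vandermonde-evaluable pieces; the final vanishing is then the clean algebraic reflection of the built-in constraint $uk + vn = 0$ linking the two hypergeometric parameters $-n\nu$ and $k\nu$.
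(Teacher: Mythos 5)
Your proposal is correct and follows essentially the same route as the paper: both reduce the theorem to the extended coefficient criterion of Proposition \ref{theoMaininversionR} with the same sequences $a_{m;n}=(-n\nu)_m/(-n)_m$ and $b_{m;k}=(k\nu)_m/(k)_m$, split the factor $n-m$ into two terminating sums, and evaluate each by a Vandermonde-type summation (the paper packages Chu--Vandermonde as coefficient extraction from products of binomial series, ending with $[x^{n-k}]\{(1-x)^{(n-k)\nu-1}(1+(\nu-1)x)\}=0$, whereas you apply Chu--Vandermonde directly after a Pochhammer reflection). The final vanishing is the same linear relation between the parameters in both cases --- your $n(u+v)-Nu=uk+vn=0$ is the paper's $V_{n,k}=0$ --- so the argument goes through (your stated value of the ``$m$-part'' is the value of $\sum_m m\,c_m$ before the overall sign from $-m$ is applied, which is consistent with your correct final formula).
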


\noindent
The factor $F(k-n,-n\nu;-n;x)$ in the definition 
(\ref{DefABxNU}) of matrix $A(x,\nu)$ is always well-defined although the third argument $-n$ is a negative integer; in fact, given 
$1 \leqslant k \leqslant n$, the definition (\cite{NIST10}, 15.2.1) 
\begin{equation}
F(k-n,-n\nu;-n;x) = 
\sum_{m=0}^{n-k} \frac{(k-n)_m(-n\nu)_m}{(-n)_m \, m!} x^m
\label{DefFpoly}
\end{equation}
shows that the denominator $(-n)_m = (-1)^m n!/(n-m)!$ never vanishes for 
all indexes $m \leqslant n-k < n$.

\begin{remark}
The polynomial factors $F(k-n,-n\nu;-n;x)$ and $F(k-n,k\nu;k;x)$ respectively involved in coefficients $A_{n,k}(x,\nu)$ and $B_{n,k}(x,\nu)$ in definition (\ref{DefABxNU}) are deduced from each other by the substitution 
$k \leftrightarrow -n$. This simple substitution, however, does not leave the remaining factor $\binom{n}{k}$ invariant and thus cannot carry out by itself the inversion scheme (\ref{eq:inversionR}). 
\end{remark}

\begin{proof}
To show that $A(x,\nu)B(x,\nu) = \mathrm{Id}$, the Identity matrix, it is sufficient to verify  criterion (\ref{Invers0}). From definition (\ref{DefAB}), we first specify the sequences $(a_{m;n,k})_{m \in \mathbb{N}}$ and 
$(b_{m;n,k})_{m \in \mathbb{N}}$ associated with a given pair $(n,k)$. On one hand, (\ref{DefFpoly}) gives $a_{m;n} = (-n\nu)_m/(-n)_m$, $m \geqslant 0$,
for given $n \in \mathbb{N}^*$ and, in particular, $a_{0;n} = 1$; on the other hand, write
\begin{equation}
F(k-n,k\nu,k;x) = \sum_{m=0}^{n-k} \frac{(k-n)_m(k\nu)_m}{(k)_m \, m!} \, x^m 
\label{DefFFbis}
\end{equation}
so that $b_{m;k} = (k\nu)_m/(k)_m$, $m \geqslant 0$, for given 
$k \in \mathbb{N}^*$ with $b_{0;k} = 1$. Let $f_n$ and $g_{k}$ respectively denote the exponential generating function of these sequences 
$(a_{m;n})_{m \geqslant 0}$ and $(b_{m;k})_{m \geqslant 0}$; the product 
$f_n(-x)g_{k}(x)$ is then given by
\begin{align}
f_n(-x)g_{k}(x) & \, = \left ( \sum_{m \geqslant 0} (-1)^m 
\frac{a_{m;n}}{m!} \, x^m \right ) 
\left ( \sum_{m \geqslant 0} \frac{b_{m;k}}{m!} \, x^m \right )
\nonumber \\
& \, = 
\sum_{m=0}^{+\infty} (-1)^m \frac{(-n\nu)_m}{(-n)_m \, m!} \, x^m \cdot 
\sum_{m=0}^{+\infty} \frac{(k\nu)_m}{(k)_m \, m!} \, x^m = 
\sum_{\ell \geqslant 0} U_\ell^{(n,k)} \, x^\ell
\nonumber
\end{align}
where
\begin{equation}
U_\ell^{(n,k)} = \sum_{m=0}^\ell (-1)^m \frac{(-n\nu)_m}{(-n)_m \, m!} 
\frac{(k\nu)_{\ell-m}}{(k)_{\ell-m} \, (\ell-m)!}, \qquad \ell \geqslant 0.
\label{DefU}
\end{equation}
Let then $n \geqslant k$; from expression (\ref{DefU}), we derive
\begin{align}
U_{n-k}^{(n,k)} = & \, \sum_{m=0}^{n-k} (-1)^m \frac{(-n\nu)_m}{(-n)_m \, m!} \cdot 
\frac{(k\nu)_{n-k-m}}{(k)_{n-k-m} \, (n-k-m)!}
\nonumber \\
= & \, \sum_{m=0}^{n-k} (-1)^m \frac{\Gamma(m-n\nu)}{\Gamma(-n\nu)} 
\cdot \frac{(-1)^m(n-m)!}{n!} \cdot \frac{1}{m!} \cdot 
\frac{\Gamma(n-k-m+k\nu)}{\Gamma(k\nu)} \; \times 
\nonumber \\
& \qquad \; \; \; 
\frac{\Gamma(k)}{\Gamma(n-k-m+k)} \cdot \frac{1}{(n-k-m)!}
\nonumber
\end{align}
after writing the Pochhammer symbol $(c)_m = \Gamma(m+c)/\Gamma(c)$ for 
$c \notin -\mathbb{N}$ and noting that $(-n)_m = (-1)^m n!/(n-m)!$. Reducing the latter expression of $U_{n-k}^{(n,k)}$ gives
\begin{align}
U_{n-k}^{(n,k)} = & \, \frac{\Gamma(k)}{n!\Gamma(-n\nu)\Gamma(k\nu)} 
\sum_{m=0}^{n-k} (n-m) \frac{\Gamma(m-n\nu)\Gamma(n-k-m+k\nu)}{m!(n-k-m)!}
\nonumber \\
= & \, \frac{\Gamma(k)}{n!\Gamma(-n\nu)\Gamma(k\nu)} 
(X_{n-k}^{(n,k)} + Y_{n-k}^{(n,k)})
\label{DefUBIS}
\end{align}
where we introduce the sums (after splitting the difference $n-m$ in the summation (\ref{DefUBIS}) as $k + (n-m-k)$)
$$
\left\{
\begin{array}{ll}
X_{n-k}^{(n,k)} = \displaystyle k \cdot \sum_{m=0}^{n-k} 
\frac{\Gamma(m-n\nu)\Gamma(n-k-m+k\nu)}{m!(n-k-m)!},
\\ \\
Y_{n-k}^{(n,k)} = \displaystyle \sum_{m=0}^{n-k} 
(n-m-k) \cdot \frac{\Gamma(m-n\nu)\Gamma(n-k-m+k\nu)}{m!(n-k-m)!}.
\end{array} \right.
$$
To calculate first $X_{n-k}^{(n,k)}/k$, note that this equals to the coefficient of $x^{n-k}$ in the power series expansion of the product 
$$
\left ( \sum_{m=0}^{+\infty} \frac{\Gamma(m-n\nu)}{m!} \, x^m \right ) 
\left ( \sum_{m=0}^{+\infty} \frac{\Gamma(m+k\nu)}{m!} \, x^m \right ) = 
\frac{\Gamma(-n\nu)}{(1-x)^{-n\nu}} \cdot \frac{\Gamma(k\nu)}{(1-x)^{k\nu}}
$$
so that
\begin{equation}
X_{n-k}^{(n,k)} = k \, \Gamma(-n\nu)\Gamma(k\nu) \cdot 
[x]^{n-k} \left \{ \frac{(1-x)^{n\nu}}{(1-x)^{k\nu}} \right \}.
\label{DefUTER}
\end{equation}
As to the second sum $Y_{n-k}^{(n,k)}$, it equals the coefficient of $x^{n-k}$ in the power series expansion of the product 
$$
\left ( \sum_{m=0}^{+\infty} \frac{\Gamma(m-n\nu)}{m!} \, x^m \right ) \cdot 
x \, \frac{\mathrm{d}}{\mathrm{d}x} 
\left [ \frac{\Gamma(k\nu)}{(1-x)^{k\nu}} \right ] = 
\frac{\Gamma(-n\nu)}{(1-x)^{-n\nu}} \times x \, 
\Gamma(k\nu) \, \frac{k\nu}{(1-x)^{k\nu+1}}
$$
so that
\begin{equation}
Y_{n-k}^{(n,k)} = \Gamma(-n\nu)\Gamma(k\nu+1) \cdot [x]^{n-k} 
\left \{ \frac{x(1-x)^{n\nu}}{(1-x)^{k\nu+1}} \right \}.
\label{DefUQUATER}
\end{equation}
Using formulas (\ref{DefUTER}) and (\ref{DefUQUATER}) for sums 
$X_{n-k}^{(n,k)}$ and $Y_{n-k}^{(n,k)}$, the expression (\ref{DefUBIS}) for 
$U_{n-k}^{(n,k)}$ then easily reduces to
\begin{align}
U_{n-k}^{(n,k)} = & \, \frac{[x]^{n-k}}{n!} \left \{ \Gamma(k+1) 
\frac{(1-x)^{n\nu}}{(1-x)^{k\nu}} + k \nu \, \Gamma(k) 
\frac{x(1-x)^{n\nu}}{(1-x)^{k\nu+1}} \right \}
\nonumber \\
= & \, \frac{k!}{n!} \, 
\Bigl \{ [x^{n-k}] (1-x)^{(n-k)\nu-1} (1+(\nu-1)x) \Bigr \}, 
\qquad n \geqslant k.
\label{DefU5}
\end{align}
With the series expansion $(1-x)^{(n-k)\nu-1} = \sum_{\ell \geqslant 0} x^\ell 
(1-(n-k)\nu)_\ell/\ell!$, expression (\ref{DefU5}) for $n-k \geqslant 1$ then gives
\begin{align}
U_{n-k}^{(n,k)} & \, = \frac{k!}{n!} 
\left \{ \frac{(1-(n-k)\nu)_{n-k}}{(n-k)!} + 
(\nu - 1) \frac{(1-(n-k)\nu)_{n-k-1}}{(n-k-1)!} \right \} 
\nonumber \\
& \, = \frac{k!}{n!(n-k)!} \, \frac{V_{n,k}}{\Gamma(1-(n-k)\nu)}
\nonumber
\end{align}
where factor $V_{n,k}$ is given by
$$
V_{n,k} = \Gamma(1-(n-k)\nu +n-k) + (\nu-1)\Gamma((n-k)(1-\nu))(n-k);
$$
the relation $\Gamma(1+z) = z \Gamma(z)$ applied to 
$z = (n-k)\nu +n-k = (n-k)(1-\nu)$ readily entails that $V_{n,k} = 0$ hence 
$$
U_{n-k}^{(n,k)} = 0, \qquad n - k \geqslant 1.
$$
Now if $n = k$, expression (\ref{DefU5}) reduces to
$$
U_{n-k}^{(n,k)} = [x^0] \left \{ 1 + \frac{\nu \, x}{1-x} \right \} = 1.
$$
The inversion condition (\ref{Invers0}) for 
$U_{n-k}^{(n,k)} = [x]^{n-k}f_n(-x)g_{k}(x) = \mathbf{1}_{n-k}$ is therefore fulfilled for all $n, \, k \geqslant 1$ and we conclude that inverse relation 
(\ref{eq:inversionR}) holds for any pair of sequences $(S_n)_{n \geqslant 1}$ and $(T_n)_{n \geqslant 1}$.
\end{proof}


\section{Generating functions}
\label{GF}


As a direct consequence of Theorem \ref{PropIn}, remarkable functional relations can be derived for the ordinary (resp. exponential) generating functions of sequences related by the inversion formula. 

\subsection{Relations for ordinary G.F.'s}
We first address ordinary generating functions and state the following reciprocal relations.

\begin{corol}
\textbf{For given complex parameters $x$ and $\nu$, let 
$(S_n)_{n \in \mathbb{N}^*}$ and $(T_n)_{n \in \mathbb{N}^*}$ be sequences related by the inversion formulas (\ref{eq:inversionR}) of Theorem 
\ref{PropIn}, that is, 
$S = B(x,\nu) \cdot T \Leftrightarrow T = A(x,\nu) \cdot S$.}

\textbf{Denote by $\mathfrak{G}_S(z)$ and $\mathfrak{G}_T(z)$ the formal ordinary generating series of $S$ and $T$, respectively. Defining the mapping 
$\Xi$ (depending on parameters $x$ and $\nu$) by}
\begin{equation}
\Xi(z) = \frac{z}{z-1}\Bigl(\frac{1-z}{1-z(1-x)}\Bigr)^{\nu},
\label{DefXi}
\end{equation}
\textbf{the relations}
\begin{equation}
\mathfrak{G}_S(z) = \left [ \frac{1-\nu}{1-z} + \frac{\nu}{1-z(1-x)} \right ]
\mathfrak{G}_T(\Xi(z))
\label{eq:OGFRelation}
\end{equation}
\textbf{and}
\begin{equation}
\mathfrak{G}_T(\xi) = \mathfrak{G}_S(\Omega(\xi)) 
\left [ \frac{1-\nu}{1-\Omega(\xi)} + 
\frac{\nu}{1-(1-x)\Omega(\xi)} \right ]^{-1}
\label{eq:OGFRelationBIS}
\end{equation}
\textbf{hold, where $\Omega$ is the inverse mapping 
$\Xi(z) = \xi \Leftrightarrow z = \Omega(\xi)$.} 
\label{corOGF}
\end{corol}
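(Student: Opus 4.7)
The plan is to apply Theorem \ref{PropIn} to write $S_n = \sum_{k=1}^n B_{n,k}(x,\nu)\, T_k$ and then interchange summations in the ordinary generating series $\mathfrak{G}_S(z) = \sum_{n \geqslant 1} S_n z^n$ so as to obtain $\mathfrak{G}_S(z) = \sum_{k\geqslant 1} T_k\, \phi_k(z)$ with $\phi_k(z) := \sum_{n\geqslant k} B_{n,k}(x,\nu)\, z^n$. The whole argument then reduces to evaluating $\phi_k(z)$ in closed form and recognizing the $k$-th power of $\Xi(z)$ in the result. Once (\ref{eq:OGFRelation}) is established, the reciprocal relation (\ref{eq:OGFRelationBIS}) follows immediately by substituting $z = \Omega(\xi)$ and using $\Xi(\Omega(\xi)) = \xi$.

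For the computation of $\phi_k(z)$, I would expand the hypergeometric polynomial $F(k-n,k\nu;k;x)$ by (\ref{DefFFbis}), use the identity $\binom{n}{k}(k-n)_m = (-1)^m n!/[k!(n-k-m)!]$ valid for $m \leqslant n-k$, and swap the summations over $m$ and $n$. The inner sum over $n$ then closes through the standard formula $\sum_{n \geqslant k+m} n!\, z^n/(n-k-m)! = (k+m)!\, z^{k+m}/(1-z)^{k+m+1}$, leaving
$$
\phi_k(z) = \frac{(-1)^k z^k}{(1-z)^{k+1}} \sum_{m \geqslant 0} \frac{(k\nu)_m (k+m)}{k \, m!} \, y^m, \qquad y := \frac{-xz}{1-z}.
$$
Splitting $(k+m)/k = 1 + m/k$, the remaining series breaks into two pieces both summable in closed form by $\sum_{m \geqslant 0} (a)_m y^m/m! = (1-y)^{-a}$ and its derivative, namely $(1-y)^{-k\nu} + \nu y (1-y)^{-k\nu-1} = (1-y)^{-k\nu-1}[1 - (1-\nu)y]$.

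Substituting $1-y = (1-(1-x)z)/(1-z)$ and $1 - (1-\nu)y = (1 - \alpha z)/(1-z)$ with $\alpha := 1-x+\nu x$, the resulting powers of $z$, $(1-z)$ and $(1-(1-x)z)$ regroup into
$$
\phi_k(z) = \frac{1-\alpha z}{(1-z)(1-(1-x)z)} \, \Xi(z)^k,
$$
the $k$-dependent factor being precisely $[-z/(1-z)]^k \cdot [(1-z)/(1-(1-x)z)]^{k\nu} = \Xi(z)^k$. Summing over $k \geqslant 1$ yields $\mathfrak{G}_S(z) = [(1-\alpha z)/((1-z)(1-(1-x)z))]\, \mathfrak{G}_T(\Xi(z))$, and a routine partial-fraction decomposition
$$
\frac{1-\alpha z}{(1-z)(1-(1-x)z)} = \frac{1-\nu}{1-z} + \frac{\nu}{1-(1-x)z}
$$
delivers (\ref{eq:OGFRelation}); the reciprocal relation (\ref{eq:OGFRelationBIS}) then follows by applying $\Omega$. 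The main (modest) obstacle will be the closed-form evaluation of the $m$-series once the inner $n$-sum has been handled, but the split $(k+m)/k = 1 + m/k$ reduces it to a one-line manipulation, after which the rest is bookkeeping with powers of $z$, $(1-z)$ and $(1-(1-x)z)$.
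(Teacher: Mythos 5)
Your proposal is correct and follows essentially the same route as the paper's proof: expand $B_{n,k}$ via (\ref{DefFFbis}), interchange the sums, close the inner sum over $n$ with $\sum_{r\geqslant m}(k+r)!\,z^r/(r-m)! = (m+k)!\,z^m/(1-z)^{k+m+1}$, split the factor $k+m$, and resum via the binomial series and its derivative to recognize $\Xi(z)^k$, with (\ref{eq:OGFRelationBIS}) obtained by substituting $z=\Omega(\xi)$. The only detail the paper adds that you omit is the one-line justification (via $\Xi(0)=0$, $\Xi'(0)=-1\neq 0$ and the implicit function theorem) that the inverse $\Omega$ actually exists near the origin.
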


\begin{proof}
\textbf{a)} From the definition (\ref{DefABxNU}) of matrix $B(x,\nu)$, the generating function of the sequence $S = B(x,\nu) \cdot T$ is given by
\begin{align}
\mathfrak{G}_S(z) & \, = \sum_{n \geqslant 1} z^n 
\left ( \sum_{k=1}^n B_{n,k}(x,\nu) T_k \right ) = 
\left ( \sum_{k=1}^n (-1)^k \frac{n!}{k!(n-k)!} F(k-n,k\nu;k;x) T_k \right ) 
\nonumber \\
& \, = \sum_{k \geqslant 1} (-1)^k T_k \frac{z^k}{k!} \sum_{n \geqslant k} 
\frac{n!}{(n-k)!} F(k-n,k\nu;k;x) \, z^{n-k}
\nonumber
\end{align}
after changing the summation order; using the expression (\ref{DefFFbis}) for the Hypergeometric coefficient $F(k-n,k\nu;k;x)$, we then obtain
\begin{align}
\mathfrak{G}_S(z) & = \sum_{k \geqslant 1} (-1)^k T_k \frac{z^k}{k!} 
\sum_{n \geqslant k} \frac{n! \, z^{n-k}}{(n-k)!} \; \times 
\nonumber \\
& \quad \sum_{m=0}^{n-k} \frac{(-1)^m(n-k)!}{(n-k-m)!}
\frac{\Gamma(m+k\nu)}{\Gamma(k\nu)} \frac{(k-1)!}{(m+k-1)!}\frac{x^m}{m!}
\nonumber \\
& = \sum_{k \geqslant 1} (-1)^k T_k \frac{z^k}{k} \sum_{n \geqslant k} 
n! \, z^{n-k} \sum_{m=0}^{n-k} \frac{(-1)^m}{(n-k-m)!}
\frac{\Gamma(m+k\nu) \, x^m}{\Gamma(k\nu)m!} \frac{1}{(m+k-1)!}
\nonumber
\end{align}
and the index change $n = k + r$, $r \geqslant 0$, yields
\begin{align}
\mathfrak{G}_S(z) & = \sum_{k \geqslant 1} (-1)^k T_k \frac{z^k}{k} 
\sum_{r \geqslant 0} (k+r)! \, z^{r} \sum_{m=0}^{r} \frac{(-1)^m}{(n-k-m)!}
\frac{(k\nu)_m \, x^m}{m!} \frac{1}{(m+k-1)!}
\nonumber \\
& = \sum_{k \geqslant 1} (-1)^k T_k \frac{z^k}{k} 
\sum_{m \geqslant 0} (-1)^m \, \frac{(k\nu)_m \, x^m}{m!} \frac{1}{(m+k-1)!}
\left ( \sum_{r = m}^{+\infty} \frac{(k+r)!}{(r-m)!} \, z^{r} \right )
\nonumber
\end{align}
where the last sum on index $r$ readily equals
$$
\sum_{r = m}^{+\infty} \frac{(k+r)!}{(r-m)!} \, z^{r} = 
\sum_{r = 0}^{+\infty} \frac{(k+m+r)!}{r!} \, z^{r+m} = 
\frac{(m+k)!}{(1-z)^{k+m+1}} \cdot z^m, \qquad \vert z \vert < 1.
$$
The latest expression of $\mathfrak{G}_S(z)$ consequently reads
\begin{align}
\mathfrak{G}_S(z) = & \, \sum_{k \geqslant 1} 
(-1)^k T_k \frac{z^k}{k} \frac{1}{(1-z)^{k+1}} 
\sum_{m \geqslant 0} (-1)^m \, \frac{(k\nu)_m}{m!} 
\left ( \frac{x\, z}{1-z} \right)^m (m+k)
\nonumber \\
= & \; \frac{1}{1-z} 
\sum_{k \geqslant 1} \frac{T_k}{k} \left ( \frac{z}{z-1} \right )^k
\Bigl [ - \frac{xz}{1-z} 
\sum_{m \geqslant 0} m \left ( \frac{- xz}{1-z} \right )^{m-1} 
\frac{(k\nu)_m}{m!}
\nonumber \\
& \qquad \qquad \qquad \qquad \qquad \; \; + 
k \times \sum_{m \geqslant 0} \left ( \frac{- xz}{1-z} \right )^{m-1} 
\frac{(k\nu)_m}{m!} \Bigr ].
\label{Sum1}
\end{align}
Using successively identity 
$\sum_{m \geqslant 0} (k\nu)_m Z^m/m! = 1/(1-Z)^{k\nu}$ and its 
term-to-term derivative 
$\sum_{m \geqslant 0} m (k\nu)_m Z^{m-1}/m! = k\nu/(1-Z)^{k\nu + 1}$ with respect to $Z$, the sum (\ref{Sum1}) reduces to
\begin{align}
\mathfrak{G}_S(z) = & \, \frac{1}{1-z} \left [ \frac{- xz}{1-z} 
\left ( \frac{1-z}{1-(1-x)z} \right ) \nu \cdot \mathfrak{G}_T(\Xi(z)) + \mathfrak{G}_T(\Xi(z)) \right ]
\nonumber \\
= & \, \frac{1}{1-z} \left [ \frac{- \nu xz}{1-(1-x)z} + 1 \right ] 
\mathfrak{G}_T(\Xi(z))
\nonumber
\end{align}
with $\Xi(z)$ defined as in (\ref{DefXi}). Writing 
$$
\frac{1}{1-z} \left [ \frac{- \nu xz}{1-(1-x)z} + 1 \right ] = 
\frac{1-\nu}{1-z} + \frac{\nu}{1-z(1-x)}
$$
eventually entails relation (\ref{eq:OGFRelation}). 

\textbf{b)} For any parameters $x$ and $\nu$, the function 
$z \mapsto \Xi(z)$ is analytic in a neigborhood of $z = 0$, with 
$\Xi(0) = 0$ and $\Xi'(z) \sim -z$ as $z \downarrow 0$, hence 
$\Xi'(0) = -1 \neq 0$. By the Implicit Function Theorem, $\Xi$ has an analytic inverse $\Omega:\xi \mapsto \Omega(\xi)$ in a neighborhood of 
$\xi = 0$ and the inversion of (\ref{eq:OGFRelation}) provides 
(\ref{eq:OGFRelationBIS}), as claimed. 
\end{proof}

\noindent 
Relation (\ref{eq:OGFRelationBIS}) between formal generating series can also be understood as a functional identity between the analytic functions  
$z \mapsto \mathfrak{G}_S(z)$ and $z \mapsto \mathfrak{G}_T(z)$ in some neighborhood of the origin $z = 0$ in the complex plane. Now, Corollary 
\ref{corOGF} can be supplemented by making explicit the inverse mapping 
$\Omega$ involved in the reciprocal relation (\ref{eq:OGFRelationBIS}). To this end, we state some preliminary properties (in the sequel, $\log$ will denote the determination of the logarithm in the complex plane cut along the negative semi-axis $]-\infty,0]$ with $\log(1) = 0$).

\begin{lemma}
\textbf{Let $R(\nu) = \vert e^{-\psi(\nu)} \vert$ where}
$$
\psi(\nu) = \left\{
\begin{array}{ll}
(1-\nu)\log(1-\nu) + \nu\log(-\nu), \quad \; 
\nu \in \mathbb{C} \setminus \, [0,+\infty[,
\\ \\
(1-\nu)\log(1-\nu) + \nu\log(\nu), \; \quad \; \; \, 
\nu \in \mathbb{R}, \; 0 \leqslant \nu < 1, 
\\ \\
(1-\nu)\log(\nu-1) + \nu\log(\nu), \; \quad \; \; 
\nu \in \mathbb{R}, \; \nu \geqslant 1.
\end{array} \right.
$$
\textbf{The power series}
$$
\pmb{\Sigma}(w) = 
\sum_{b \geqslant 1} 
\frac{\Gamma(b(1-\nu))}{\Gamma(b)\Gamma(1-b\nu)} \cdot w^b, 
\qquad \vert w \vert < R(\nu),
$$
\textbf{is given by}
\begin{equation}
\pmb{\Sigma}(w) = \frac{\Theta(w)-1}{\nu \, \Theta(w) + 1 - \nu}
\label{U0}
\end{equation}
\textbf{where $\Theta:w \mapsto \Theta(w)$ denotes the unique analytic solution 
(depending on $\nu$) to the implicit equation}
\begin{equation}
1 - \Theta + w \cdot \Theta^{1-\nu} = 0, 
\qquad \vert w \vert < R(\nu),
\label{DefTheta}
\end{equation}
\textbf{verifying $\Theta(0) = 1$.}

\label{lemmU}
\end{lemma}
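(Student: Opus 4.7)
The plan is to reduce the statement to a Lagrange--Bürmann inversion. First I would set $\Theta = 1+u$ so that the implicit equation $1 - \Theta + w\Theta^{1-\nu} = 0$ becomes $u = w(1+u)^{1-\nu}$, which is of the standard form $u = w\,\phi(u)$ with $\phi(u)=(1+u)^{1-\nu}$. The Implicit Function Theorem yields a unique analytic solution $u(w)$ in some neighborhood of $0$ with $u(0)=0$, hence a unique analytic $\Theta(w) = 1+u(w)$ with $\Theta(0)=1$. The domain of analyticity is controlled by the critical points of the inverse map $u \mapsto u/\phi(u) = u(1+u)^{\nu-1}$; its derivative is $(1+\nu u)/(1+u)^{2-\nu}$ and vanishes at $u_\ast = -1/\nu$, giving the critical value $w_\ast = (-\nu)^{-\nu}(1-\nu)^{-(1-\nu)} = e^{-\psi(\nu)}$. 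A short case analysis on $\nu$ (according to the branches used in the definition of $\psi$) confirms that $|w_\ast| = R(\nu)$ is indeed the radius of convergence claimed.

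Next, to obtain the coefficients of $\pmb{\Sigma}(w)$, I would write $\pmb{\Sigma}(w) = H(u(w))$ with
\[
H(u) = \frac{u}{1 + \nu u},
\]
since $\pmb{\Sigma}(w) = (\Theta-1)/(\nu\Theta + 1 - \nu) = u/(1+\nu u)$. Then I would invoke the Lagrange--Bürmann formula in the form
\[
[w^n] H(u(w)) = [u^n] H(u) \, \phi(u)^n \left(1 - \frac{u\phi'(u)}{\phi(u)}\right).
\]
With $\phi(u) = (1+u)^{1-\nu}$, a direct computation gives $u\phi'(u)/\phi(u) = (1-\nu)u/(1+u)$, hence $1 - u\phi'(u)/\phi(u) = (1+\nu u)/(1+u)$. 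The factor $(1+\nu u)$ cancels with the denominator of $H$ and one is left with
\[
[w^n]\pmb{\Sigma}(w) = [u^n] \frac{u \,(1+u)^{n(1-\nu)-1}}{1} = [u^{n-1}](1+u)^{n(1-\nu)-1} = \binom{n(1-\nu)-1}{n-1},
\]
which simplifies to $\Gamma(n(1-\nu))/(\Gamma(n)\Gamma(1-n\nu))$. This exactly matches the series defining $\pmb{\Sigma}(w)$, and the identity \eqref{U0} follows on the common disk of analyticity.

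The main obstacle is the verification that the modulus of the critical value $w_\ast$ matches $R(\nu)=|e^{-\psi(\nu)}|$ across the three regimes of $\nu$ used in the definition of $\psi$: the formulas must agree with the principal branch of $\log$ (cut along $(-\infty,0]$) in each case, which requires a careful bookkeeping of the arguments of $-\nu$, $1-\nu$ and $\nu-1$. Once this is settled, the rest of the proof is routine Lagrange inversion together with the Gamma-function identity $\Gamma(n(1-\nu)) = (n-1)!\binom{n(1-\nu)-1}{n-1}\Gamma(1-n\nu)$ needed to rewrite the binomial coefficient in the form announced.
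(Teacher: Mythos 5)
Your proof is correct, but it follows a genuinely different route from the paper's. The paper proceeds in the opposite direction: it first recognizes the coefficient $\sigma_b=\Gamma(b(1-\nu))/(\Gamma(b)\Gamma(1-b\nu))$ as $-\tfrac{1}{\nu}\binom{-1+b(1-\nu)}{b}$ and then cites the P\'olya--Szeg\H{o} generating-function identity $1+\sum_{b\geqslant 1}\binom{\alpha+b\beta}{b}w^b=\Theta(w)^{\alpha+1}/((1-\beta)\Theta(w)+\beta)$ with $\alpha=-1$, $\beta=1-\nu$, which yields \eqref{U0} after a one-line algebraic rearrangement; the radius of convergence is obtained separately by applying Stirling's formula to $\sigma_b$ in each of the three regimes of $\nu$. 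You instead start from the closed form, substitute $\Theta=1+u$ to put the implicit equation in the standard shape $u=w\,\phi(u)$ with $\phi(u)=(1+u)^{1-\nu}$, and compute the Taylor coefficients of $H(u(w))$ with $H(u)=u/(1+\nu u)$ by the Lagrange--B\"urmann formula; the cancellation of $1+\nu u$ against the correction factor $(1+\nu u)/(1+u)$ is exactly right and gives $[w^n]\pmb{\Sigma}(w)=\binom{n(1-\nu)-1}{n-1}=\Gamma(n(1-\nu))/(\Gamma(n)\Gamma(1-n\nu))$, as required. Your approach is self-contained (it effectively reproves the P\'olya--Szeg\H{o} identity the paper imports) and it explains \emph{why} the particular rational function of $\Theta$ appears: $H$ is precisely the post-composition that makes the B\"urmann correction factor cancel. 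What it buys less cheaply is the radius of convergence: locating the dominant singularity of the implicitly defined $u(w)$ at the critical value $w_\ast=u_\ast(1+u_\ast)^{\nu-1}$, $u_\ast=-1/\nu$, requires checking that no other singularity (in particular the branch point of $\phi$ at $u=-1$, and the correct branch of the powers for complex $\nu$) intervenes earlier, whereas the paper's Stirling computation on the coefficients settles the radius directly in each of the three regimes. You flag this bookkeeping as the remaining obstacle, which is a fair assessment; with it carried out, your argument is complete.
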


\noindent 
The proof of Lemma \ref{lemmU} is detailed in Appendix \ref{A3}. 

\begin{corol}
\textbf{For all $\nu \in \mathbb{C}$ and $x \neq 0$, the inverse mapping 
$\Omega$ of $\Xi$ defined in (\ref{DefXi}) can be expressed by}
\begin{equation}
\Omega(\xi) = 
\frac{\pmb{\Sigma}(x \, \xi)}{(1-x(1-\nu)) \, \pmb{\Sigma}(x \, \xi) - x}, 
\qquad \vert \xi \vert < \frac{R(\nu)}{\vert x \vert},
\label{InvXI}
\end{equation}
\textbf{in terms of power series $\pmb{\Sigma}(\cdot)$ defined in Lemma 
\ref{lemmU}.}
\label{corOGFbis}
\end{corol}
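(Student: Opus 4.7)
The plan is to prove the corollary by direct verification: set $z := \Omega(\xi)$ as defined by the right-hand side of (\ref{InvXI}), and check that $\Xi(z) = \xi$ via the identity (\ref{DefXi}). Uniqueness of the analytic inverse of $\Xi$ near the origin (established in part \textbf{b)} of the proof of Corollary \ref{corOGF}) will then finish the argument.

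First, I would introduce the shorthand $w = x\xi$, $s = \pmb{\Sigma}(w)$ and $\Theta = \Theta(w)$. Lemma \ref{lemmU} furnishes the two relations $s = (\Theta - 1)/(\nu \Theta + 1 - \nu)$ and $w = (\Theta - 1)\Theta^{\nu-1}$, from which one also deduces, by a short algebraic manipulation, the equivalent forms $\Theta = (1 + (1-\nu)s)/(1 - \nu s)$ and $s/(1-\nu s) = \Theta - 1$. These four identities will be the only facts from Lemma \ref{lemmU} I will use.

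Second, starting from $z = s / \bigl((1 - x(1-\nu))s - x\bigr)$, I would compute the two quantities appearing in (\ref{DefXi}). A direct calculation (common denominator, expanding numerators) gives
$$
1 - z = \frac{-x\bigl(1 + (1-\nu)s\bigr)}{(1-x(1-\nu))s - x},
\qquad
1 - (1-x)z = \frac{x(\nu s - 1)}{(1-x(1-\nu))s - x},
$$
whence $(1-z)/(1 - (1-x)z) = (1 + (1-\nu)s)/(1 - \nu s) = \Theta$. Similarly, $z-1 = x(1 + (1-\nu)s)/((1-x(1-\nu))s - x)$ gives $z/(z-1) = s/\bigl(x(1+(1-\nu)s)\bigr) = s/\bigl(x\Theta(1-\nu s)\bigr)$, using $1 + (1-\nu)s = \Theta(1 - \nu s)$.

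Combining the two pieces with (\ref{DefXi}),
$$
\Xi(z) = \frac{s\,\Theta^{\nu-1}}{x(1 - \nu s)} = \frac{(\Theta - 1)\Theta^{\nu-1}}{x} = \frac{w}{x} = \xi,
$$
where the middle equality is exactly $s/(1-\nu s) = \Theta - 1$ and the penultimate one is the implicit equation (\ref{DefTheta}). Finally I would note that the proposed $\Omega$ is analytic on $|\xi| < R(\nu)/|x|$ because $\pmb{\Sigma}$ is analytic on $|w| < R(\nu)$ with $\pmb{\Sigma}(0) = 0$, so the denominator $(1 - x(1-\nu))s - x$ equals $-x \neq 0$ at the origin and stays nonvanishing in a neighborhood; thus $\Omega$ is an analytic function with $\Omega(0) = 0$, and by the uniqueness provided by the Implicit Function Theorem it must coincide with the inverse of $\Xi$. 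The calculation is algebraically straightforward, so I expect no genuine obstacle; the only care required is to keep track of which of the two equivalent forms of $s \leftrightarrow \Theta$ one is using at each step.
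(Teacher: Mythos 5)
Your proposal is correct and rests on exactly the same ingredients as the paper's proof — the implicit equation (\ref{DefTheta}), the relation (\ref{U0}) between $\pmb{\Sigma}$ and $\Theta$, and the homographic identity $(1-z)/(1-(1-x)z)=\Theta$ — only run in the reverse direction: the paper derives $\Omega$ by computing $h(z)=\Theta(x\xi)$ and inverting the homography, whereas you verify $\Xi(\Omega(\xi))=\xi$ and then appeal to uniqueness of the local inverse. Since you correctly supply that uniqueness step (via the Implicit Function Theorem argument already used for Corollary \ref{corOGF}), the argument is complete and essentially equivalent to the paper's.
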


\begin{proof}
\textbf{\textit{(i)}} The homographic transform $h:z \mapsto \theta$ with
$\theta = (1-z)/(1-z(1-x))$ is an involution, with inverse $h^{-1}$ given by
\begin{equation}
z = h^{-1}(\theta) = \frac{1-\theta}{1-\theta(1-x)}.
\label{MobInv}
\end{equation}
Let then $\xi = \Xi(z)$ with function $\Xi$ defined as in (\ref{DefXi}); we first claim that the corresponding $\theta = h(z)$ equals 
$\theta = \Theta(x \, \xi)$ where $\Theta$ is the function defined by the implicit equation (\ref{DefTheta}). In fact, definition (\ref{DefXi}) for 
$\Xi$ and expression (\ref{MobInv}) for $z$ in terms of $\theta$ together entail
$$
\xi = \Xi(z) = \frac{z}{z-1} \, \theta^{\, \nu} = 
\displaystyle \frac{1-\theta}{1-\theta(1-x)} 
\left ( \frac{1-\theta}{1-\theta(1-x)} - 1 \right )^{-1} \, \theta^{\, \nu} = 
\frac{\theta - 1}{x \, \theta} \, \theta^{\, \nu} 
$$
and the two sides of the latter equalities give 
$1 - \theta + x \xi \theta^{1-\nu} = 0$, hence the identity 
$\theta = \Theta(x \, \xi)$, as claimed. 

\textbf{\textit{(ii)}} The corresponding inverse $z = \Omega(\xi)$ can now be expressed as follows; equality (\ref{U0}) applied to $w = x \, \xi$ can be first solved for $\Theta(x \, \xi)$, giving
$$
\Theta(x \, \xi) = \frac{1 + (1-\nu)\pmb{\Sigma}(x \xi)}
{1 - \nu \, \pmb{\Sigma}(x \xi)};
$$
it then follows from (\ref{MobInv}) and this expression of $\Theta(x \, \xi)$ that
$$
z = \Omega(\xi) = \frac{1 - \Theta(x \, \xi)}{1 - (1-x)\Theta(x \, \xi)} = 
\frac{\displaystyle 1 - \frac{1 + (1-\nu)\pmb{\Sigma}(x \xi)}
{\displaystyle 1 - \nu \, \pmb{\Sigma}(x \xi)}}
{\displaystyle 1 - (1-x)\frac{1 + (1-\nu)\pmb{\Sigma}(x \xi)}
{1 - \nu \, \pmb{\Sigma}(x \xi)}}
$$
which easily reduces to formula (\ref{InvXI}).
\end{proof}

\begin{remark}
It can be shown (see Appendix \ref{A3}.c)) that function $\pmb{\Sigma}$ is the solution to the first order non-linear differential equation
\begin{equation}
w \, \pmb{\Sigma}'(w) = \pmb{\Sigma}(w) 
\left [1 - \nu \, \pmb{\Sigma}(w) \right ]  
\left [1 + (1-\nu)\pmb{\Sigma}(w) \right ]
\label{EDiff0}
\end{equation}
with initial condition $\pmb{\Sigma}(0) = 0$ (so that $\pmb{\Sigma}'(0) = 1$).
\end{remark}

\subsection{Relations for exponential G.F.'s}
We now turn to the derivation of identities between the exponential generating functions of any pair of related sequences $S$ and $T$.

\begin{corol}
\textbf{Given sequences $S$ and $T$ related by the inversion formulae 
$S = B(x,\nu) \cdot T \Leftrightarrow T = A(x,\nu) \cdot S$, the exponential generating function $\mathfrak{G}_S^*$ of the sequence $S$ can be expressed by}
\begin{equation}
\mathfrak{G}_S^*(z) = \exp(z) \cdot 
\sum_{k \geqslant 1} (-1)^k T_k \, \frac{z^k}{k!} \, \Phi(k\nu;k;-x \, z), 
\qquad z \in \mathbb{C},
\label{GsExp}
\end{equation}
\textbf{where $\Phi(\alpha;\beta;\cdot)$ denotes the Confluent Hypergeometric function with parameters $\alpha$, $\beta \notin -\mathbb{N}$.}
\label{corEGF}
\end{corol}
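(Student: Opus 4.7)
The plan is to substitute the inversion formula into the definition of the exponential generating function and then identify the resulting inner sum as a confluent hypergeometric function by rearranging a double series. Starting from
\begin{equation*}
\mathfrak{G}_S^*(z) = \sum_{n \geqslant 1} S_n \, \frac{z^n}{n!}
= \sum_{n \geqslant 1} \frac{z^n}{n!} \sum_{k=1}^{n} (-1)^k \binom{n}{k} F(k-n,k\nu;k;x) \, T_k,
\end{equation*}
I would first exchange the summation order on $n$ and $k$, simplifying $\binom{n}{k}/n! = 1/(k!(n-k)!)$ and shifting $r = n-k$, to obtain
\begin{equation*}
\mathfrak{G}_S^*(z) = \sum_{k \geqslant 1} (-1)^k T_k \, \frac{z^k}{k!} \cdot H_k(z,x),
\qquad
H_k(z,x) = \sum_{r \geqslant 0} \frac{z^r}{r!} \, F(-r,k\nu;k;x).
\end{equation*}

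The second step is to show that $H_k(z,x) = e^{z} \, \Phi(k\nu;k;-xz)$. Expanding the Hypergeometric polynomial via (\ref{DefFFbis}) with the identity $(-r)_m = (-1)^m r!/(r-m)!$ gives
\begin{equation*}
H_k(z,x) = \sum_{r \geqslant 0} \sum_{m=0}^{r} \frac{(-1)^m z^r}{(r-m)!} \, \frac{(k\nu)_m \, x^m}{(k)_m \, m!}.
\end{equation*}
Swapping summations and setting $s = r-m$ factors the double sum as
\begin{equation*}
H_k(z,x) = \left( \sum_{s \geqslant 0} \frac{z^s}{s!} \right) \left( \sum_{m \geqslant 0} \frac{(k\nu)_m}{(k)_m \, m!} \, (-xz)^m \right) = e^{z} \, \Phi(k\nu;k;-xz),
\end{equation*}
using the standard series representation of the confluent Hypergeometric function. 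Plugging this back into the expression for $\mathfrak{G}_S^*(z)$ yields exactly (\ref{GsExp}).

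The manipulation is essentially routine; the only point demanding a modicum of care is the justification for exchanging the two summations over $r$ and $m$ (equivalently, over $s$ and $m$), but since the factorization converges absolutely for every $z \in \mathbb{C}$ (the exponential series converges everywhere, and $\Phi(k\nu;k;\cdot)$ is an entire function of its argument), the formal rearrangement is valid pointwise in $\mathbb{C}$. No additional convergence issue arises for the outer sum in $k$ because $T \in \mathscr{H}_0$ ensures that $(T_k)$ are the Taylor coefficients of an entire function, making the whole expression an equality of formal series that lifts to an identity of analytic functions on $\mathbb{C}$.
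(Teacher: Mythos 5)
Your proposal is correct and follows essentially the same route as the paper: substitute the $B(x,\nu)$ coefficients into the exponential generating series, exchange the order of summation, expand the Hypergeometric polynomial, and resum the shifted index to extract the factor $e^z$ and the series for $\Phi(k\nu;k;-xz)$. Your inner sum $\sum_{s\geqslant 0} z^{s+m}/s! = z^m e^z$ is exactly the paper's identity $\sum_{r\geqslant m} z^r/(r-m)! = z^m e^z$, so the two arguments coincide up to notation.
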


\begin{proof}
A calculation similar to that of Corollary \ref{corOGF} gives
\begin{align}
\mathfrak{G}_S^*(z) = & \, \sum_{n \geqslant 0} \frac{z^n}{n!} 
\left ( \sum_{k=1}^n B_{n,k}(x,\nu) T_k \right )
\nonumber \\
= & \, \sum_{k \geqslant 1} (-1)^k T_k \frac{z^k}{k} 
\sum_{m \geqslant 0} (-1)^m \, \frac{\Gamma(m+k\nu) \, x^m}{\Gamma(k\nu)m!} \frac{1}{(m+k-1)!}
\left ( \sum_{r = m}^{+\infty} \frac{z^r}{(r-m)!} \right );
\nonumber
\end{align}
as $\sum_{r \geqslant m} z^r/(r-m)! = z^m \exp(z)$, the latter reduces to
$$
\mathfrak{G}_S^*(z) = \exp(z) \sum_{k \geqslant 1} (-1)^k T_k \frac{z^k}{k} 
\sum_{m \geqslant 0} (-xz)^m \, \frac{(k\nu)_m}{m!} 
\frac{1}{(k-1)!(k)_m}
$$
which, from the expansion of $\Phi(k\nu;k;-xz)$ in powers of $-xz$, yields  
(\ref{GsExp}).
\end{proof}

Reversely, we have not been able to obtain a remarkable identity for the exponential generating function $\mathfrak{G}_T^*$ in terms of 
$\mathfrak{G}_S^*$. 


\section{Inversion of operator $\mathfrak{L}$}
\label{SecA}


We first apply (Section \ref{SecA1}) the inversion formula of Theorem 
\ref{PropIn} to the resolution of the infinite linear system (\ref{T0})  formulated in the Introduction. The associated relation between exponential generating functions (Corollary \ref{corEGF}) further provides an integral representation for the solution $E^*$ to the integral equation (\ref{EI0}), hence for the inverse $\mathfrak{L}^{-1}$ of integro-differential operator 
$\mathfrak{L}$ introduced in (\ref{DefTT}).

Operator $\mathfrak{L}$ has been introduced for real parameters 
$x \in \; ]0,1[$ and $\nu < 0$; as per the discussion of previous Sections \ref{LTS} and \ref{GF} where complex parameters have been generally considered, we now extend definition (\ref{DefTT}) of $\mathfrak{L}$ to complex parameters 
\begin{itemize}
\item $x \in \; \mathbb{C} \setminus 
( \mathbb{R}^- \cup \{1\})$ (so that $1/(1-x)$ is finite and does not belong to the integration interval $[0,1]$) 
\item and $\nu \in \mathbb{C}$ such that $\mathrm{Re}(\nu) < 0$. 
\end{itemize}
Within these assumptions, it is easily verified that 
$\mathfrak{L}(\mathscr{H}_0) \subset \mathscr{H}_0$ where 
$\mathscr{H}_0$ is again the linear space of entire functions in 
$\mathbb{C}$ vanishing at $0$.

\subsection{Resolution of system (\ref{T0})}
\label{SecA1}
We have claimed in the Introduction (Section \ref{IM}.\textbf{II}) that 
integro-differential equation (\ref{EI0}) reduces to the infinite system 
(\ref{T0}). We first justify this assertion by showing, in particular, how the coefficients of system (\ref{T0}) can be eventually expressed in terms of Hypergeometric polynomials.

\begin{prop}
\textbf{Reduction Property (II) holds: equation (\ref{EI0}) reduces to system (\ref{T0}) with matrix $Q = (Q_{b,\ell})_{1 \leqslant \ell  \leqslant b}$ related to Hypergeometric polynomials as in (\ref{Q0}).}
\label{LemT0}
\end{prop}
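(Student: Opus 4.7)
My plan is to derive system~(\ref{T0}) from equation~(\ref{EI0}) by expanding the power series of $E^*$ and $K$, collecting the coefficient of $z^b$ on each side, and evaluating the resulting $\zeta$-integral in closed form.

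\emph{Coefficient extraction.} I would expand
$f(\zeta\mathfrak{R}(\zeta)z)=\sum_\ell E_\ell\zeta^\ell\mathfrak{R}(\zeta)^\ell z^\ell/\ell!$,
$f'(\zeta\mathfrak{R}(\zeta)z)=\sum_\ell E_\ell\zeta^{\ell-1}\mathfrak{R}(\zeta)^{\ell-1}z^{\ell-1}/(\ell-1)!$, and
$e^{-\mathfrak{R}(\zeta)z}=\sum_m(-1)^m\mathfrak{R}(\zeta)^m z^m/m!$,
and compute the Cauchy products inside the integrand of $\mathfrak{L}E^*(z)$. The three contributions from the terms $f$, $z\mathfrak{R}(\zeta)f$ and $-cz\mathfrak{R}(\zeta)f'$ combine, via elementary manipulation of factorials, into
$$
[z^b]\mathfrak{L}E^*(z) = \frac{1}{b!}\sum_{\ell=1}^b(-1)^{b-\ell}\binom{b}{\ell}E_\ell\int_0^1\mathfrak{R}(\zeta)^b\zeta^{\ell-1}\bigl[(\ell-b+1)\zeta-c\ell\bigr]\,d\zeta.
$$
Matching this with $[z^b]K(z)=(-1)^bK_b/b!$ and dividing by $(-1)^b$ yields system~(\ref{T0}) with the integral representation $Q_{b,\ell}=\int_0^1\mathfrak{R}(\zeta)^b\zeta^{\ell-1}[(\ell-b+1)\zeta-c\ell]\,d\zeta$.

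\emph{Evaluation via change of variables.} I would then apply the involutive substitution $t=(1-\zeta)/(1-(1-x)\zeta)$, which maps $[0,1]$ onto itself and satisfies $\mathfrak{R}(\zeta)=t^{-\nu}(1-(1-x)t)/x$ together with $\zeta=(1-t)/(1-(1-x)t)$ (the same substitution will be the natural one for justifying Factorization Property~(I)). A direct computation then gives
$$
Q_{b,\ell} = x^{1-b}\!\int_0^1 t^{-b\nu}(1-t)^{\ell-1}\bigl(1-(1-x)t\bigr)^{b-\ell-2}\bigl[(\ell-b+1)(1-t)-c\ell\bigl(1-(1-x)t\bigr)\bigr]\,dt.
$$
Using $c(1-x)=1-\nu x$, the bracket rewrites as $-(b-1)(1-t)-x\ell[(1-\nu)+\nu t(1-x)]/(1-x)$, so $Q_{b,\ell}$ splits into a linear combination of three Euler-type integrals of the form $\int_0^1 t^{\beta-1}(1-t)^{\gamma-\beta-1}(1-(1-x)t)^{-\alpha}dt=B(\beta,\gamma-\beta)F(\alpha,\beta;\gamma;1-x)$, each identifiable as a Gauss hypergeometric function with argument $1-x$.

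\emph{Hypergeometric recognition and main obstacle.} To recast the combination as the terminating polynomial $F(\ell-b,-b\nu;-b;x)$ in~(\ref{Q0}), I would apply Euler's transformation $F(a,b;c;z)=(1-z)^{c-a-b}F(c-a,c-b;c;z)$ together with the contiguous relations of the Gauss hypergeometric function, consolidating the three $F(\,\cdot\,;1-x)$'s into a single series in $x$. A more elementary alternative is to expand $(1-(1-x)t)^{b-\ell-2}$ as a Newton binomial series in $(1-x)$, integrate term by term against the remaining $t$-factors (producing Beta functions), and regroup the coefficients of $x^m$ to match the explicit expansion $F(\ell-b,-b\nu;-b;x)=\sum_{m=0}^{b-\ell}(\ell-b)_m(-b\nu)_m\,x^m/[(-b)_m\,m!]$ directly. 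This is the principal obstacle: the Euler integrals naturally produce hypergeometric functions with denominator parameter $\ell+1-b\nu$ or $\ell+2-b\nu$ in argument $1-x$, whereas the target has the non-standard denominator $-b$ in argument $x$. The specific value $c=(1-\nu x)/(1-x)$ is precisely what forces the three contributions to collapse onto a single terminating series, and the prefactor $-\Gamma(b)\Gamma(1-b\nu)/\Gamma(b-b\nu)\cdot x^{1-b}/(1-x)$ then emerges from the Gamma ratios produced by the Beta integrations.
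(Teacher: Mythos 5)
Your coefficient-extraction step is correct and coincides with the paper's: expanding $E^*$, $(E^*)'$ and the exponential, identifying the coefficient of $z^b$, and reindexing the $E_{\ell-1}$ contribution via $\binom{b}{\ell+1}(\ell+1)=(b-\ell)\binom{b}{\ell}$ yields exactly the paper's representation $Q_{b,\ell}=(\ell+1-b)M_{b,\ell}-\ell c\,M_{b,\ell-1}$ with $M_{b,\ell}=\int_0^1\zeta^\ell\mathfrak{R}(\zeta)^b\,\mathrm{d}\zeta$, which is your integral. Your substitution $t=(1-\zeta)/(1-(1-x)\zeta)$ is also legitimate (one checks $\mathfrak{R}(\zeta)=t^{-\nu}(1-(1-x)t)/x$ and $\mathrm{d}\zeta=-x\,\mathrm{d}t/(1-(1-x)t)^2$); it is the integral-level avatar of Euler's transformation $F(\alpha,\beta;\gamma;z)=(1-z)^{\gamma-\alpha-\beta}F(\gamma-\alpha,\gamma-\beta;\gamma;z)$, which the paper instead applies later at the function level, so up to that point the two routes are equivalent.

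The genuine gap is that the reduction of your three Euler-type integrals to the single terminating polynomial $F(\ell-b,-b\nu;-b;x)$ of (\ref{Q0}) — which you yourself flag as ``the principal obstacle'' — is never carried out, and this is precisely where the substantive content of the proposition lies. The paper's Appendix \ref{A4} needs a specific, non-obvious chain: the contiguity identity (\ref{ID1}) and the relation (\cite{NIST10}, 15.5.18) to collapse the pair of Gauss functions with third parameters $\ell+1-b\nu$ and $\ell+2-b\nu$; the recognition of the resulting combination $R_{b,\ell}$ as the derivative $\frac{\mathrm{d}}{\mathrm{d}z}[(1-z)^{-b}F(\ell-b,-b\nu;\ell-b\nu;z)]$ via (\ref{DF}), which is the step that produces a \emph{single} $F$ with third parameter $\ell-b\nu+1$; and finally the connection formula (\ref{IDFG}) (\cite{NIST10}, 15.8.7), which is the only identity in the list that manufactures the non-standard denominator parameter $-b$ and switches the argument from $1-x$ to $x$. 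Your proposed alternative of expanding $(1-(1-x)t)^{b-\ell-2}$ and regrouping powers of $x$ does not work as stated, because the binomial expansion is naturally in powers of $(1-x)$, not $x$; re-expanding and matching the coefficient of $x^m$ against $(\ell-b)_m(-b\nu)_m/[(-b)_m\,m!]$ amounts to reproving (\ref{IDFG}) from scratch and is not the ``more elementary'' shortcut you suggest. Without one of these two routes being executed, the proof of the identification (\ref{Q0}) is incomplete.
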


\begin{proof}
To derive system (\ref{T0}), we expand both sides of (\ref{EI0}) into power series of variable $z$ and identify like powers on each side. The series expansion (\ref{defE*}) of $E^*(Z)$ in powers of $Z$ first provides
\begin{equation}
(1 + Z)E^*(\zeta Z) - c \, Z \, \frac{\mathrm{d} E^*}{\mathrm{d} z}(\zeta Z) = 
\sum_{b \geqslant 1} \Lambda_b(\zeta) \frac{{Z}^b}{b!}
\label{Bsv00}
\end{equation}
where we set
$\Lambda_b(\zeta) = \zeta^b E_b + b \, \zeta^{b-1} 
E_{b-1} - b \, c \zeta^{b-1}E_b$ 
for all $\zeta$ and with the constant $c = (1-\nu x)/(1-x)$; applying equality (\ref{Bsv00}) to the argument $Z = \mathfrak{R}(\zeta) \cdot z$, the integrand of $\mathfrak{L} E^*(z)$ in (\ref{DefTT}) can then be expanded into a power series of $z$ as
\begin{equation}
\mathfrak{L} E^*(z) = \int_0^{1} \biggl[ \sum_{b \geqslant 1} 
\Lambda_b(\zeta) \, \frac{\zeta^b \, \mathfrak{R}(\zeta)^b \, z^b}{b!} \biggr]
\, e^{-\mathfrak{R}(\zeta) \, z} \, \mathrm{d}\zeta.
\label{Bsv0}
\end{equation}
Now, expanding the exponential $e^{-\mathfrak{R}(\zeta) \, z}$ of the integrand in (\ref{Bsv0}) into a power series of $z$ gives the expansion
\begin{equation}
\mathfrak{L} E^*(z) = \sum_{b \geqslant 0} (-1)^b \frac{z^b}{b!} \, 
\sum_{\ell=0}^b (-1)^{\ell} \binom{b}{\ell} \int_0^{U} 
\zeta^\ell \Lambda_\ell(\zeta) \mathfrak{R}(\zeta)^b \, \mathrm{d}\zeta
\label{Bsv}
\end{equation}
(after noting that $\Lambda_0(\zeta) = 0$ since $E_0 = 0$ by 
definition). On account of expansion (\ref{Bsv}) with the above definition (\ref{Bsv00}) of $\Lambda_\ell(\zeta)$, together with the expansion (\ref{defE*}) for $K(z)$, the identification of like powers of these expansions readily yields the relation
\begin{equation}
\sum_{\ell=1}^b (-1)^\ell \binom{b}{\ell} B_{b,\ell} E_\ell^* + 
\sum_{\ell=1}^b (-1)^\ell \binom{b}{\ell} \ell \, M_{b,\ell-1} 
E_{\ell-1} = K_b, \qquad b \geqslant 1,
\label{T0bis}
\end{equation}
with $B_{b,\ell} = M_{b,\ell} - \ell \, c \, M_{b,\ell-1}$, 
where $M_{b,\ell}$ denotes the definite integral
\begin{equation}
M_{b,\ell} = \int_0^1 \zeta^\ell \, \mathfrak{R}(\zeta)^b \, \mathrm{d}\zeta, 
\qquad 1 \leqslant \ell \leqslant b.
\label{DefMbell}
\end{equation}
By first changing the index in the second sum in the left-hand side of 
(\ref{T0bis}) and then using identity 
$\binom{b}{\ell+1} = (b-\ell) \cdot \binom{b}{\ell}/(\ell + 1)$, 
(\ref{T0bis}) reduces to (\ref{T0}) with coefficients
\begin{equation}
Q_{b,\ell} = (\ell + 1 - b) M_{b,\ell} - 
\ell \, c \, M_{b,\ell-1}, \qquad 1 \leqslant \ell \leqslant b.
\label{DefQbell}
\end{equation}
The calculation of integral $M_{b,\ell}$ in (\ref{DefMbell}) in terms of Hypergeometric functions and its reduction to Hypergeometric polynomials is detailed in Appendix \ref{A4}; this eventually provides expression 
(\ref{Q0}) for the coefficients of matrix $Q = (Q_{b,\ell})$. 
\end{proof}

We can now deduce the unique solution to system (\ref{T0}).

\begin{corol}
\textbf{Let $\nu \in \mathbb{C}$ with $\mathrm{Re}(\nu) < 0$. Given the sequence $(K_b)_{b \geqslant 1}$, the unique solution 
$(E_b)_{b \geqslant 1}$ to system (\ref{T0}) is given by}
\begin{equation}
E_b = (1-x) \sum_{\ell = 1}^b (-1)^{\ell - 1} 
\binom{b}{\ell} F(\ell-b,\ell \nu;\ell;x) \, x^{\ell - 1} 
\frac{\Gamma(\ell - \ell \nu)}{\Gamma(\ell)\Gamma(1-\ell \nu)} \, K_\ell
\label{E0}
\end{equation}
\textbf{for all $b \geqslant 1$.}
\label{C2}
\end{corol}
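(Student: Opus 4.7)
The proof is essentially an immediate application of Theorem \ref{PropIn} to the already-prepared system (\ref{T0BIS}). My plan is as follows.

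First, I would recall from the Introduction (equations (\ref{T0BIS})--(\ref{T0TER})) that system (\ref{T0}), with coefficients $Q_{b,\ell}$ of the form (\ref{Q0}), has been recast into
\begin{equation*}
\sum_{\ell=1}^b A_{b,\ell}(x,\nu)\, E_\ell = \widetilde{K}_b, \qquad b \geqslant 1,
\end{equation*}
with $A_{b,\ell}(x,\nu) = (-1)^\ell \binom{b}{\ell} F(\ell-b,-b\nu;-b;x)$ and reduced right-hand side
\begin{equation*}
\widetilde{K}_b = -\,\frac{\Gamma(b-b\nu)}{\Gamma(b)\Gamma(1-b\nu)}\,(1-x)\,x^{b-1}\, K_b.
\end{equation*}
This is precisely the half of the inversion formula (\ref{eq:inversionR}) with $T_n = \widetilde{K}_n$ and $S_n = E_n$. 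Since the hypothesis $\mathrm{Re}(\nu) < 0$ ensures in particular that $-b\nu \notin -\mathbb{N}$ and $1-b\nu \neq 0$, so that the Gamma factors and the hypergeometric polynomials in (\ref{DefABxNU}) are well-defined for all $b \geqslant 1$, Theorem \ref{PropIn} applies verbatim.

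Second, I would apply the reciprocal direction of (\ref{eq:inversionR}) to obtain
\begin{equation*}
E_b = \sum_{\ell=1}^b B_{b,\ell}(x,\nu)\, \widetilde{K}_\ell
    = \sum_{\ell=1}^b (-1)^\ell \binom{b}{\ell} F(\ell-b,\ell\nu;\ell;x)\, \widetilde{K}_\ell.
\end{equation*}
Substituting the expression of $\widetilde{K}_\ell$ and collecting the two signs into $(-1)^{\ell-1}$ yields exactly formula (\ref{E0}). Uniqueness of the solution follows from the fact that $A(x,\nu)$ is lower-triangular with non-zero diagonal entries $A_{\ell,\ell}(x,\nu) = (-1)^\ell \neq 0$, as already noted in Section \ref{LTS}.

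There is essentially no obstacle: the work has already been done, on one side by the reduction of (\ref{EI0}) to the system (\ref{T0BIS}) carried out in Proposition \ref{LemT0}, and on the other side by the general inversion Theorem \ref{PropIn}. The only care needed is to keep track of signs and of the $\Gamma$-factors when passing from $K_\ell$ to $\widetilde{K}_\ell$, and to verify that the parameter constraint $\mathrm{Re}(\nu) < 0$ guarantees that none of the Gamma functions in the denominators vanishes for $\ell \geqslant 1$, so that $\widetilde{K}_\ell$ is well defined and the hypergeometric polynomials $F(\ell-b,\ell\nu;\ell;x)$ in $B_{b,\ell}(x,\nu)$ retain their polynomial nature.
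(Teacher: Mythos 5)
Your proposal is correct and follows essentially the same route as the paper: recast system (\ref{T0}) into the form (\ref{S0})/(\ref{T0BIS}) with the reduced right-hand side $\widetilde{K}_b$ of (\ref{S1}), apply the reciprocal direction of Theorem \ref{PropIn} with $S=E$ and $T=\widetilde{K}$, and substitute back to obtain (\ref{E0}). The sign and $\Gamma$-factor bookkeeping checks out, and the uniqueness remark via the non-vanishing diagonal $A_{\ell,\ell}(x,\nu)=(-1)^\ell$ matches the paper's earlier observation.
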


\begin{proof}
By expression (\ref{Q0}) for the coefficients of lower-triangular matrix $Q$, equation (\ref{T0}) equivalently reads
\begin{equation}
\sum_{\ell=1}^b (-1)^\ell \binom{b}{\ell} F(\ell-b,-b\nu;-b;x) \cdot 
E_\ell = \widetilde{K}_b, \qquad 1 \leqslant \ell \leqslant b,
\label{S0}
\end{equation}
when setting
\begin{equation}
\widetilde{K}_b = \displaystyle - \, \frac{\Gamma(b - b \nu)}
{\Gamma(b)\Gamma(1-b \nu)} (1-x)x^{b-1} \cdot K_b, 
\qquad b \geqslant 1.
\label{S1}
\end{equation}
The application of inversion Theorem \ref{PropIn} to lower-triangular system (\ref{S0}) readily provides the solution sequence 
$(E_\ell)_{\ell \in \mathbb{N}}$ in terms 
of the sequence $(\widetilde{K}_b)_{b \in \mathbb{N}^*}$; using then  transformation (\ref{S1}), the final solution (\ref{E0}) for the sequence 
$(E_\ell)_{\ell \in \mathbb{N}^*}$ follows. 
\end{proof}

\subsection{Inversion of operator $\mathfrak{L}$}
\label{SecA2}
The factorization property asserted in the Introduction for operator 
$\mathfrak{L}$ is proved in the following.

\begin{prop}
\textbf{Factorization Property (I) holds: the linear operator $\mathfrak{L}$ on space $\mathscr{H}_0$ can be factorized as in (\ref{DefTTbis}), in terms of operators $\delta = z \, \mathrm{d}/\mathrm{d}z$ and $\mathfrak{M}$.}
\label{FactorP}
\end{prop}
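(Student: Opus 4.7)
The plan is to prove the factorization by performing a single change of variable in the definition (\ref{DefTT}) of $\mathfrak{L}$, and then to match the resulting integral with an explicit computation of $\delta \circ \mathfrak{M}$. The difference will be exhibited as the integral of a total derivative whose boundary terms vanish on $\mathscr{H}_0$. Throughout, let $u(t) = (z/x) t^{-\nu}(1-t)$ and $v(t) = (z/x) t^{-\nu}(1-(1-x)t)$ denote the combinations of the integration variable $t$ appearing in (\ref{DefSS}).

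The first step is the substitution $\zeta = (1-t)/(1-(1-x)t)$, which sends $[0,1]$ onto itself with reversed orientation. A short computation of the new arguments gives precisely $\zeta\,\mathfrak{R}(\zeta)\cdot z = u(t)$, $\mathfrak{R}(\zeta)\cdot z = v(t)$, while $\mathrm{d}\zeta = -x\,\mathrm{d}t/[1-(1-x)t]^2$. After accounting for the orientation reversal, this yields
\begin{equation*}
\mathfrak{L}f(z) = \int_0^1 e^{-v(t)} \Bigl[ (1+v(t)) f(u(t)) - c \, v(t) \, f'(u(t)) \Bigr] \frac{x \, \mathrm{d}t}{[1-(1-x)t]^2}.
\end{equation*}
On the other hand, since $u$ and $v$ are both linear in $z$, differentiating (\ref{DefSS}) under the integral sign gives immediately
\begin{equation*}
\delta \mathfrak{M} f(z) = \int_0^1 e^{-v(t)} \Bigl[ u(t) f'(u(t)) - v(t) f(u(t)) \Bigr] \frac{\mathrm{d}t}{t}.
\end{equation*}

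The factorization (\ref{DefTTbis}) then reduces to showing that the integrand of the difference $\mathfrak{L}f(z) - \frac{x\nu(1-\nu)}{1-x}\delta\mathfrak{M}f(z)$ is the total derivative $\mathrm{d}/\mathrm{d}t [g(t) e^{-v(t)} f(u(t))]$ for a suitable rational function $g$. Expanding this derivative and matching the coefficient of $f'(u(t))$ uniquely determines $g(t) = \beta(t)/u'(t)$, where $\beta(t)$ is the $f'(u(t))$-coefficient in the difference. Using $c(1-x) = 1-\nu x$ and $u'(t) = -(z/x) t^{-\nu-1}[\nu+(1-\nu)t]$, one is led to the numerator polynomial $\nu(1-\nu) + [1 - 2\nu + \nu^2(2-x)] t + \nu(1-\nu)(1-x) t^2$, which factors as $[\nu + (1-\nu) t]\cdot[(1-\nu) + \nu(1-x) t]$; the cancellation of the first factor with $u'(t)$ yields the clean candidate
\begin{equation*}
g(t) = \frac{x \bigl[ (1-\nu) + \nu(1-x) t \bigr]}{(1-x)\bigl[ 1 - (1-x) t \bigr]}.
\end{equation*}

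The remaining step is to verify, as a polynomial identity in $t$, that the $f(u(t))$-coefficient $\alpha(t)$ of the difference equals $g'(t) - v'(t) g(t)$. A direct computation yields the pleasant simplification $g'(t) = x/[1-(1-x)t]^2$, and the identity then reduces, after clearing denominators, to a quadratic in $t$ whose only non-trivial middle coefficient amounts to $\nu^2 + (1-\nu)^2 = 1 - 2\nu(1-\nu)$. The boundary terms $[g(t) e^{-v(t)} f(u(t))]_0^1$ vanish: at $t = 1$ one has $u(1) = 0$ and $f \in \mathscr{H}_0$ forces $f(u(1)) = 0$; at $t = 0$, the assumption $\mathrm{Re}(\nu) < 0$ makes $u(t), v(t) \to 0$, and $f(0) = 0$ ensures that the integrand vanishes. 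The main obstacle is the algebraic bookkeeping involved in identifying the correct $g(t)$ and verifying the quadratic polynomial identity; once these are in hand, the proof is complete.
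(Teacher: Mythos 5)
Your proof is correct, and it takes a genuinely different route from the paper's. The paper proves the factorization by going through the series machinery it has already built: it expands $E^*$ and $K$ in exponential series, invokes the system (\ref{T0}) with the Hypergeometric coefficients (\ref{Q0}), applies the transformation (\ref{IDFG}) and Euler's integral representation (\ref{HyperGauss}) to each polynomial $F(\ell-b,-b\nu;-b;x)$, interchanges the summations with the $t$-integration, and resums the exponential series to recognize $\delta\circ\mathfrak{M}$. You instead work directly at the level of the integral operators: the homographic substitution $\zeta = (1-t)/(1-(1-x)t)$ (which is precisely the involution $h$ already used in the paper's proof of Corollary \ref{corOGFbis}) transports (\ref{DefTT}) into the $t$-variable, and the discrepancy with $\frac{x\nu(1-\nu)}{1-x}\,\delta\mathfrak{M}f$ is exhibited as an exact differential. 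I checked the key computations: $\zeta\mathfrak{R}(\zeta)z = u(t)$ and $\mathfrak{R}(\zeta)z = v(t)$ under the substitution; the factorization of your numerator quadratic as $[\nu+(1-\nu)t][(1-\nu)+\nu(1-x)t]$ with the first factor cancelling against $u'(t)$; the simplification $g'(t)=x/[1-(1-x)t]^2$; and the final identity, which in the variable $s=(1-x)t$ reduces to $(1-\nu)^2+\nu^2 = 1-2\nu(1-\nu)$ — all correct, as is your accounting of where $\mathrm{Re}(\nu)<0$ and $f\in\mathscr{H}_0$ enter (integrability at $t=0$ and vanishing of both boundary terms). Your argument is shorter, self-contained (it needs neither Proposition \ref{LemT0} nor any hypergeometric identity), and makes the structural reason for the factorization transparent; the paper's approach is heavier but falls out of machinery it needs anyway for the explicit inversion of the linear system.
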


\begin{proof}
Calculating the exponential generating function of the sequence 
$(-1)^bK_b$, $b \geqslant 1$, from relation (\ref{T0}) with help of 
(\ref{Q0}) for the coefficients of matrix $Q$ gives
\begin{align}
& \mathfrak{L}E^*(z) = K(z) = \sum_{b \geqslant 1} (-1)^b K_b \, z^b \; = 
\nonumber \\
& \sum_{b \geqslant 1} - \frac{\Gamma(b)\Gamma(1-b\nu)}{\Gamma(b-b\nu)} \, \frac{x^{1-b}}{1-x} \, \frac{(-z)^b}{b!} \sum_{\ell = 1}^b (-1)^\ell \binom{b}{\ell} \, F(\ell-b,-b\nu;-b;x) E_\ell,
\nonumber
\end{align}
for all $z \in \mathbb{C}$, that is,
\begin{align}
\mathfrak{L}E^*(z) = 
& \sum_{\ell \geqslant 1} \frac{(-1)^\ell}{\ell!} E_\ell \; \times 
\nonumber \\
& \sum_{b \geqslant \ell} - \frac{\Gamma(b)\Gamma(1-b\nu)}{\Gamma(b-b\nu)} \, 
\frac{x^{1-b}}{1-x} \, \frac{(-z)^b}{(b-\ell)!} F(\ell-b,-b\nu;-b;x)
\label{F0}
\end{align}
(after changing the summation order on indexes $b$ and $\ell$). Applying the general identity (\ref{IDFG}) to parameters $m = b-\ell \geqslant 0$, 
$\beta = -b\nu$ and $\gamma = \ell - b\nu +1$ to express polynomial 
$F(\ell-b,-b\nu,-b;x)$ in terms of polynomial 
$F(\ell-b,-b\nu,\ell-b\nu+1;1-x)$, we further obtain
\begin{align}
& - \frac{\Gamma(b)\Gamma(1-b\nu)}{\Gamma(b - b\nu)} \, F(\ell-b,-b\nu,-b;x) 
\; = 
\nonumber \\
& - \frac{(1-\nu)\Gamma(\ell+1)\Gamma(1-b\nu)}{\Gamma(\ell-b\nu+1)} \, 
F(\ell-b,-b\nu,\ell-b\nu+1;1-x);
\label{F1}
\end{align}
using the integral representation recalled in Appendix \ref{A4} - 
Equ.(\ref{HyperGauss}) for the factor $F(\ell-b,-b\nu,\ell-b\nu+1;1-x)$ in the right-hand side of (\ref{F1}) eventually yields 
$$
- \frac{\Gamma(b)\Gamma(1-b\nu)}{\Gamma(b - b\nu)} \, F(\ell-b,-b\nu,-b;x) 
= b\nu(1-\nu) \int_0^1 t^{-b\nu-1}(1-t)^\ell(1-(1-x)t)^{b-\ell} \, 
\mathrm{d}t.
$$
Now, replacing the latter into the right-hand side of (\ref{F0}) provides
\begin{align}
\mathfrak{L}E^*(z) = & \, \frac{x\nu(1-\nu)}{1-x} \, 
\sum_{\ell \geqslant 1} 
\frac{(-1)^\ell}{\ell!} \, E_\ell \; \times
\nonumber \\
& \, \sum_{b \geqslant \ell} \frac{b}{(b-\ell)!} \cdot \left ( - \frac{z}{x} \right )^b \, \int_0^1 t^{-b\nu-1}(1-t)^\ell(1-(1-x)t)^{b-\ell} \, 
\mathrm{d}t
\label{F3}
\end{align}
where, with the index change $b' = b - \ell$, the latter sum over 
$b \geqslant \ell$ for given $\ell$ equivalently reads
\begin{align}
& \sum_{b \geqslant \ell} \frac{b}{(b-\ell)!} \cdot 
\left ( - \frac{z}{x} \right )^b \, 
\int_0^1 t^{-b\nu-1}(1-t)^\ell(1-(1-x)t)^{b-\ell} \, \mathrm{d}t \; =
\nonumber \\
& \sum_{b' \geqslant 0} \frac{b'+\ell}{b'!} \cdot 
\left ( - \frac{z}{x} \right )^{b'+\ell} \, 
\int_0^1 t^{-(b'+\ell)\nu}(1-t)^\ell(1-(1-x)t)^{b'} \, 
\frac{\mathrm{d}t}{t} \; = 
\nonumber \\
& z \, \frac{\mathrm{d}}{\mathrm{d}z} \left [ 
\sum_{b' \geqslant 0} \frac{1}{b'!} \cdot 
\left ( - \frac{z}{x} \right )^{b'+\ell} \, 
\int_0^1 t^{-(b'+\ell)\nu}(1-t)^\ell(1-(1-x)t)^{b'} \, 
\frac{\mathrm{d}t}{t} \right ] \; = 
\nonumber \\
& z \, \frac{\mathrm{d}}{\mathrm{d}z} 
\left [ \left ( - \frac{z}{x} \right )^{\ell} 
\int_0^1 t^{-\ell \nu} (1-t)^\ell \, \frac{\mathrm{d}t}{t} \times 
\exp \left ( - \frac{z}{x} \, t^{-\nu}[1-(1-x)t] \right ) \right ].
\label{F4}
\end{align}
Replacing expression (\ref{F4}) into the left-hand side of (\ref{F3}), the linearity of operator $\delta = z \, \mathrm{d}/\mathrm{d}z$ and the permutation of the summation on index $\ell$ with the integration with respect to variable $t \in [0,1]$ enable us to obtain
$$
\mathfrak{L}E^*(z) = \frac{x\nu(1-\nu)}{1-x} \cdot  
\delta \Biggl [ \int_0^1 \frac{\mathrm{d}t}{t} \, 
e^{- \frac{z}{x} \, t^{-\nu}[1-(1-x)t]} \; 
\sum_{\ell \geqslant 1} 
\frac{E_\ell}{\ell!} 
\left ( \frac{z}{x} \right )^{\ell} t^{-\ell \nu}(1-t)^\ell \Biggr ],
$$
that is,
\begin{align}
\mathfrak{L}E^*(z) = & \, \frac{x\nu(1-\nu)}{1-x} \cdot  
\delta \Biggl [ \int_0^1 \frac{\mathrm{d}t}{t} 
e^{- \frac{z}{x} \, t^{-\nu}[1-(1-x)t]} \times 
E^* \left ( \frac{z}{x} \, t^{-\nu}(1-t) \right ) \Biggr ] 
\nonumber \\
= & \, \frac{x\nu(1-\nu)}{1-x} \cdot (\delta \circ \mathfrak{M})E^*(z), 
\qquad z \in \mathbb{C},
\nonumber
\end{align}
for any function $E^* \in \mathscr{H}_0$, as claimed in (\ref{DefTTbis}), with the corresponding definition of integral operator $\mathfrak{M}$ on space 
$\mathscr{H}_0$. 
\end{proof}

As mentioned in the Introduction, the factorization (\ref{DefTTbis}) of operator $\mathfrak{L}$ allows one to write equation (\ref{EI0}) equivalently as 
\begin{equation}
\mathfrak{M}E^* = K_1
\label{EI0bis}
\end{equation}
where the given $K_1 \in \mathscr{H}_0$ relates to the initial function $K$ as introduced in (\ref{DefK1}). As function $\tau:t \in [0,1] \mapsto t^{-\nu}(1-t)$ has a unique maximum at point $\widehat{t} = \nu/(\nu - 1)$ for $\nu < 0$, we can introduce the variable changes 
$t \in [0,\widehat{t}] \mapsto \tau_-(t) = t^{-\nu}(1-t)$ and 
$t \in [\widehat{t},1] \mapsto \tau_+(t) = t^{-\nu}(1-t)$ on segments 
$[0,\widehat{t}]$ and $[\widehat{t},1]$, respectively; we further denote by
\begin{equation}
\theta_- = \tau_-^{-1}, \qquad \theta_+ = \tau_+^{-1}
\label{Thetapm}
\end{equation}
the respective inverse mappings of $\tau_-$ and $\tau_+$, both defined on segment $[0,\tau*]$ where $\widehat{\tau} = 
\tau(\widehat{t}) = (\widehat{t})^{-\nu}(1-\widehat{t})$ (see illustration on Fig.\ref{GraphTau}). The variable changes $\tau_-$ and $\tau_+$ then allow us to write equation (\ref{EI0bis}) as a singular Volterra equation.

\begin{figure}[htb]
\scalebox{1}{\includegraphics[width=13cm, trim = 0cm 10.5cm 11cm 1.5cm,clip]
{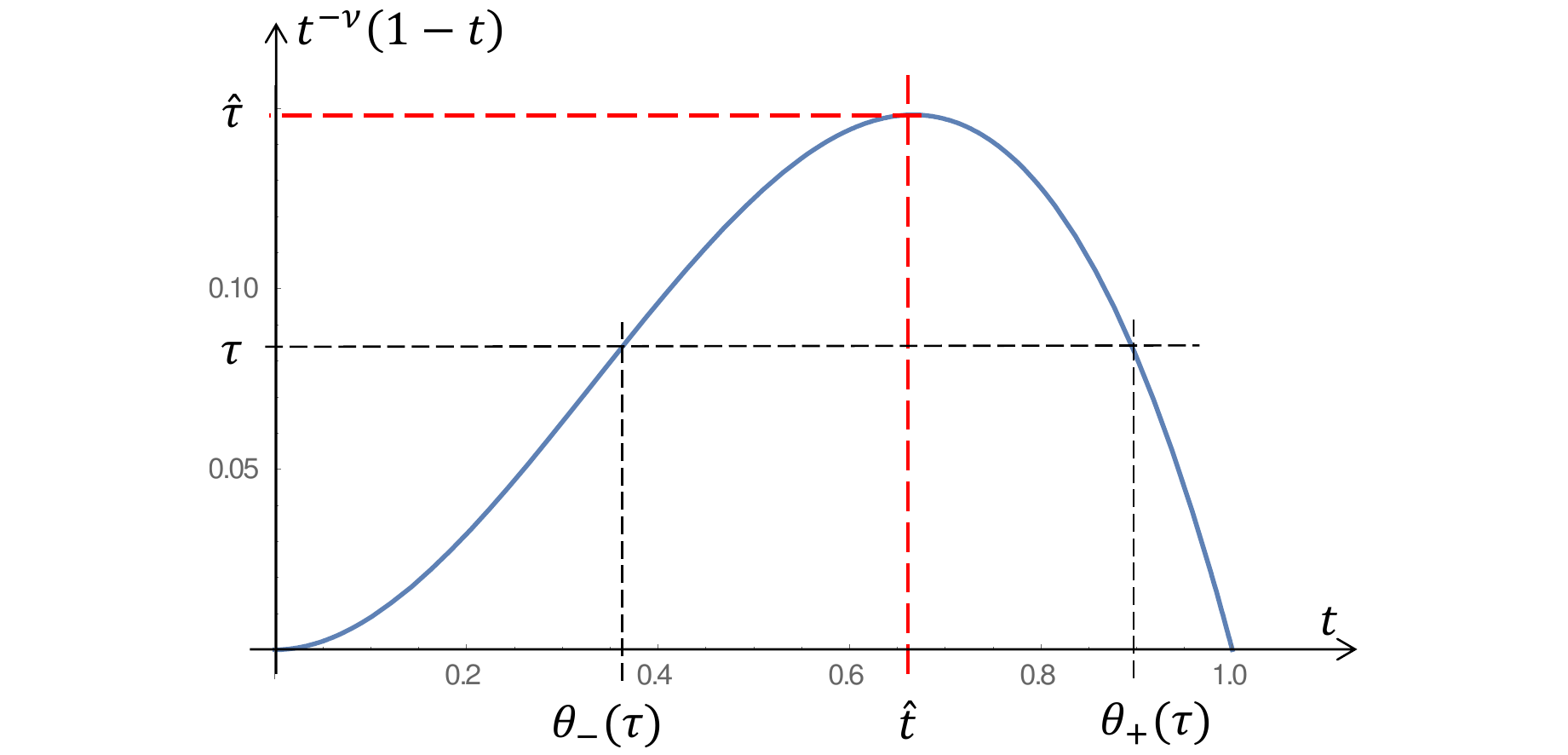}}
\caption{\textit{Graph of function $\tau:t \in [0,1] \mapsto t^{-\nu}(1-t)$ 
($\nu = -2$).}}
\label{GraphTau}
\end{figure}

\begin{corol}
\textbf{Given constants $x$ and $\nu$ as above, the equivalent equation 
(\ref{EI0bis}) can be recast into the singular Volterra integral equation}
\begin{equation}
\int_0^{\left ( \frac{\widehat{\tau}}{x} \right ) \, z} 
\left [ \Psi_- \left ( z, \frac{x\xi}{z}\right ) - 
\Psi_+ \left ( z, \frac{x\xi}{z} \right ) \right ] 
E^*(\xi) \, \mathrm{d}\xi = 
\frac{z}{x} \cdot K_1(z), \qquad z \in \mathbb{C},
\label{VOL0}
\end{equation}
\textbf{where we set}
$$
\Psi_{\pm}(z,\tau) = 
\frac{e^{- \frac{z}{x} 
\theta_{\pm}(\tau)^{-\nu}(1-(1-x)\theta_{\pm}(\tau))}}
{\theta_{\pm}(\tau)^{-\nu}(-\nu+(\nu-1)\theta_{\pm}(\tau))}, 
\qquad 0 \leqslant \tau \leqslant \widehat{\tau},
$$
\textbf{with $\theta_{\pm}$ introduced in (\ref{Thetapm}), and where 
$K_1 \in \mathscr{H}_0$ is defined by (\ref{DefK1}).}
\label{C3}
\end{corol}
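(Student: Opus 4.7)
The plan is to reduce the equivalent integral equation $\mathfrak{M}E^*=K_1$ (which is equivalent to (\ref{EI0}) by Proposition \ref{FactorP}) to the claimed singular Volterra form (\ref{VOL0}) via two successive changes of variable in the defining integral (\ref{DefSS}) of $\mathfrak{M}$. Since $\tau:t\mapsto t^{-\nu}(1-t)$ is not monotone on $[0,1]$ for $\nu<0$ -- it increases on $[0,\widehat{t}\,]$ and decreases on $[\widehat{t},1]$ with unique maximum $\widehat{\tau}$ at $\widehat{t}=\nu/(\nu-1)$ -- the first step is to split the integration at $\widehat{t}$, writing $\mathfrak{M}E^*(z)=I_-(z)+I_+(z)$ for the contributions of the increasing and decreasing branches, respectively.

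On each branch, the substitution $\tau=\tau_\pm(t)$, with inverse $t=\theta_\pm(\tau)$ defined in (\ref{Thetapm}), converts the integrand using $\tau'(t)=t^{-\nu-1}[-\nu+(\nu-1)t]$, so that
$$\frac{dt}{t}=\frac{d\tau}{t^{-\nu}[-\nu+(\nu-1)t]}.$$
After writing $t=\theta_\pm(\tau)$, the resulting denominator reproduces exactly the denominator in the definition of $\Psi_\pm(z,\tau)$ given in the statement, while the exponential factor of the original integrand supplies the numerator. The essential sign bookkeeping is that on $[\widehat{t},1]$ both $d\tau$ and $-\nu+(\nu-1)t$ are negative; when reversing the orientation so that $\tau$ runs from $0$ to $\widehat{\tau}$ in both integrals, $I_+$ therefore acquires an overall minus sign relative to $I_-$, which is exactly the origin of the difference $\Psi_--\Psi_+$ appearing in (\ref{VOL0}). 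At this intermediate stage the identity $\mathfrak{M}E^*(z)=K_1(z)$ reads $\int_0^{\widehat{\tau}}[\Psi_-(z,\tau)-\Psi_+(z,\tau)]\,E^*(z\tau/x)\,d\tau=K_1(z)$.

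The remaining step is the affine substitution $\xi=(z/x)\,\tau$, which exchanges $\tau$ for the argument of $E^*$: the upper limit becomes $(\widehat{\tau}/x)z$ and the Jacobian $d\tau=(x/z)\,d\xi$ introduces a factor $x/z$ that is absorbed by multiplying both sides by $z/x$, yielding exactly (\ref{VOL0}).

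I do not foresee any genuine obstacle beyond careful sign tracking on the decreasing branch $[\widehat{t},1]$; everything else is a clean change-of-variables computation. The integrable square-root singularity anticipated in (\ref{Fredh0bis}) will arise automatically from $\tau'(\widehat{t})=0$ and the quadratic vanishing of $-\nu+(\nu-1)t$ at $\widehat{t}$, making $\theta_\pm'$ and hence $\Psi_\pm$ blow up like $(\widehat{\tau}-\tau)^{-1/2}$ as $\tau\uparrow\widehat{\tau}$, but this singularity analysis is not required for the derivation of (\ref{VOL0}) itself.
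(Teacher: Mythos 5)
Your proposal is correct and follows essentially the same route as the paper's proof in Appendix \ref{A5}: split the integral at $\widehat{t}$, substitute $\tau=\tau_\pm(t)$ on each monotone branch (with the orientation reversal on $[\widehat{t},1]$ producing the minus sign in $\Psi_--\Psi_+$), then set $\xi=(z/x)\tau$. The only substantive difference is that the paper also verifies the recast integral is well-defined — checking integrability not just of the $O((\widehat{\tau}-\tau)^{-1/2})$ singularity at $\tau=\widehat{\tau}$ but also at $\tau=0$, where $\Psi_-(z,\tau)\sim -1/(\nu\tau)$ and the hypothesis $E^*\in\mathscr{H}_0$ is needed to compensate — a check you set aside (and note that $-\nu+(\nu-1)t$ vanishes linearly, not quadratically, at $\widehat{t}$; the square-root behaviour comes from the quadratic vanishing of $\widehat{\tau}-\tau$ in $t-\widehat{t}$).
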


\noindent
We refer to Appendix \ref{A5} for the proof of Corollary \ref{C3}. 

\begin{remark}
A few observations can be brought at this stage:

$\bullet$ as detailed in Appendix \ref{A5}, the kernel 
$\tau \mapsto \Psi_-(z,\tau) - \Psi_+(z,\tau)$ of Volterra equation 
(\ref{VOL0}) is singular with an integrable singularity at the boundary 
$\tau = \widehat{\tau}$ of order $O(\widehat{\tau} - \tau)^{-1/2}$;

$\bullet$ although giving a remarkable formulation to initial equation 
(\ref{EI0}), equation (\ref{VOL0}) is nevertheless difficult to solve  directly as its kernel depends on inverse functions $\theta_-$ and $\theta_+$ which cannot be made explicit simply (note that implicit equation 
$\tau = t^{-\nu}(1-t)$ in $t$ can be formulated as equation 
(\ref{DefTheta}) considered above, with $w = -\tau$ and $\nu$ replaced by 
$1 + \nu$, for which the analytic solution $\theta_-$ can be locally expressed near the origin via power series $\mathbf{\Sigma}$). 
\end{remark}

We now provide an integral representation for the inverse of operator 
$\mathfrak{L}$ on $\mathscr{H}_0$ or, equivalently, an integral representation for the solution of integral equation (\ref{EI0}) initially addressed in the Introduction.

\begin{corol}
\textbf{Let $\nu \in \mathbb{C}$ with $\mathrm{Re}(\nu) < 0$. Then}
\begin{itemize}
\item[\textbf{a)}] \textbf{the operator $\mathfrak{L}:\mathscr{H}_0 \rightarrow \mathscr{H}_0$ is a bijection;}
\item[\textbf{b)}] \textbf{given $K \in \mathscr{H}_0$, the solution 
$E^* = \mathfrak{L}^{-1}K \in \mathscr{H}_0$ to the integral equation 
(\ref{EI0}) has the integral representation}
\begin{align}
E^*(z) & = \, \mathfrak{L}^{-1} \, K(z) 
\nonumber \\
& = \, \frac{1-x}{2i\pi x} \, e^{z} 
\int_{(0+)}^1 \frac{e^{-xtz}}{t(1-t)} \, 
K \left ( x \, z \, (-t)^{\nu}(1-t)^{1-\nu} \right ) \, \mathrm{d}t,
\quad z \in \mathbb{C},
\label{SolEI0bis}
\end{align}
\end{itemize}
\textbf{where the contour in integral (\ref{SolEI0}) in variable $t$ is a 
loop starting and ending at point $t = 1$, and encircling the origin 
$t = 0$ once in the positive sense.}
\label{C4}
\end{corol}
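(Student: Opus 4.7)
The plan is to derive the integral representation by combining the algebraic inversion of Corollary \ref{C2} with the exponential generating function identity of Corollary \ref{corEGF}, and then translate the resulting series into a contour integral by means of a Pochhammer-type representation of the confluent hypergeometric function $\Phi$.

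For part \textbf{a)}, injectivity is immediate from Proposition \ref{LemT0} together with Corollary \ref{C2}: if $\mathfrak{L}E^* = 0$, the Taylor coefficients of $E^*$ satisfy system (\ref{T0}) with zero right-hand side, and the unique lower-triangular solvability forces $E_\ell = 0$ for every $\ell \geq 1$. Surjectivity will follow from part \textbf{b)} as soon as the right-hand side of (\ref{SolEI0bis}) is shown to define an entire function of $z$ vanishing at the origin; this is immediate because the integrand is analytic in $z$ uniformly over the compact loop and reduces to $0$ at $z=0$.

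For part \textbf{b)}, the central identification is to apply Corollary \ref{corEGF} with $S_n = E_n$ and $T_n = \widetilde{K}_n = -\frac{\Gamma(n-n\nu)}{\Gamma(n)\Gamma(1-n\nu)}(1-x)x^{n-1}K_n$ from (\ref{S1}). This produces
\[
E^*(z) \;=\; e^{z}\sum_{k \geq 1}(-1)^k \, \widetilde{K}_k \,\frac{z^k}{k!}\,\Phi(k\nu;k;-xz).
\]
Next I would represent each $\Phi(k\nu;k;-xz)$ as a Hankel-type loop integral. Because $\mathrm{Re}(k\nu) < 0$ while $\mathrm{Re}(k(1-\nu)) > 0$, the Euler integral on $[0,1]$ diverges at $t=0$, but the loop from $t=1$ encircling $0$ supplies an analytic substitute: expanding $e^{-xtz}$ termwise, applying the Pochhammer--Beta identity $\int_{(0+)}^1 (-t)^{\alpha-1}(1-t)^{\beta-1}\,dt \,\propto\, \Gamma(\beta)/[\Gamma(1-\alpha)\Gamma(\alpha+\beta)]$ term by term, and using the reflection formula $\Gamma(a)\Gamma(1-a)\sin(\pi a)=\pi$ produces an identity of the form
\[
\Phi(k\nu;k;-xz) \;=\; C_k\int_{(0+)}^{1} e^{-xtz}\,(-t)^{k\nu-1}(1-t)^{k(1-\nu)-1}\,dt,
\]
with $C_k$ proportional to $\Gamma(k)\Gamma(1-k\nu)/\Gamma(k-k\nu)$ and a sign fixed by the branch of $(-t)^{k\nu}$ along the loop.

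Upon substitution, the $\Gamma$-ratios carried by $\widetilde{K}_k$ cancel exactly with those in $C_k$, leaving the prefactor $(1-x)/(2i\pi x)$ in front of a single contour integral. After exchanging summation and integration -- justified by the uniform absolute convergence of the Taylor expansion of $K$ on the compact loop together with the integrability of $(1-t)^{k(1-\nu)-1}$ near $t=1$ guaranteed by $\mathrm{Re}(\nu) < 0$ -- the factorization
\[
(-t)^{k\nu-1}(1-t)^{k(1-\nu)-1} \;=\; -\,\frac{(-t)^{k\nu}(1-t)^{k(1-\nu)}}{t(1-t)}
\]
collapses the inner sum into $-K\bigl(xz(-t)^\nu(1-t)^{1-\nu}\bigr)/[t(1-t)]$, yielding exactly formula (\ref{SolEI0bis}). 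The principal technical obstacle will be the Pochhammer-type contour representation of $\Phi(k\nu;k;\cdot)$ when $\mathrm{Re}(k\nu) < 0$: the branch of $(-t)^{k\nu}$ on the loop must be fixed consistently, the termwise application of the Pochhammer--Beta identity must be rigorously justified, and Fubini's theorem invoked to interchange sum and integral. A secondary care lies in tracking the overall sign through the successive reflection identities so as to recover the exact prefactor $(1-x)/(2i\pi x)$ stated in (\ref{SolEI0bis}).
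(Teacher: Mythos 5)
Your proposal follows essentially the same route as the paper: it applies Corollary \ref{corEGF} with $S=E$ and $T=\widetilde{K}$ from (\ref{S1}), replaces each $\Phi(k\nu;k;-xz)$ by the loop-contour representation $\Phi(\alpha;\beta;Z)=-\frac{1}{2i\pi}\frac{\Gamma(1-\alpha)\Gamma(\beta)}{\Gamma(\beta-\alpha)}\int_1^{(0+)}e^{Zt}(-t)^{\alpha-1}(1-t)^{\beta-\alpha-1}\,\mathrm{d}t$ (which the paper simply cites from Erd\'elyi while you re-derive it termwise), cancels the $\Gamma$-ratios, and collapses the series into $K$. The argument and the bookkeeping of constants are correct, so this is the paper's proof up to inessential presentational differences.
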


\begin{proof} 
\textbf{a)} Given $K \in \mathscr{H}_0$, equation (\ref{EI0}) for 
$E^* \in \mathscr{H}_0$ is equivalent to system (\ref{T0}) for the coefficients $(E_\ell)_{\ell \geqslant 1}$ of the exponential series expansion of $E^*$. For $\mathrm{Re}(\nu) < 0$, Corollary \ref{C2} entails these coefficients are uniquely determined by expression 
(\ref{E0}). The linear operator $\mathfrak{L}:\mathscr{H}_0 \rightarrow \mathscr{H}_0$ is consequently one-to-one and onto, and has an inverse 
$\mathfrak{L}^{-1}$ on $\mathscr{H}_0$.

\textbf{b)} An integral representation for the inverse operator 
$\mathfrak{L}^{-1}$ is now derived as follows. Setting $S = E$ and 
$T = \widetilde{K}$ in (\ref{GsExp}), with the sequence 
$\widetilde{K} = (\widetilde{K}_b)_{b \geqslant 1}$ defined as in 
(\ref{S1}), we obtain
\begin{align}
\mathfrak{G}_{E}^*(z) & = e^z \cdot 
\sum_{b \geqslant 1} (-1)^b \widetilde{K}_b \, \frac{z^b}{b!} \, 
\Phi(b\nu;b;-x \, z)
\nonumber \\
& = - \frac{1-x}{x} \, e^z \cdot 
\sum_{b \geqslant 1} (-1)^b \frac{\Gamma(b - b \nu)}
{\Gamma(b)\Gamma(1-b \nu)} \cdot K_b \, \frac{(xz)^b}{b!} \, 
\Phi(b\nu;b;-x \, z)
\label{Sol0}
\end{align}
after using (\ref{S1}) to express $\widetilde{K}_b$ in terms of $K_b$, 
$b \geqslant 1$. Invoke then the integral representation
\begin{equation}
\Phi(\alpha;\beta;Z) = - \frac{1}{2 i \pi}
\frac{\Gamma(1-\alpha)\Gamma(\beta)}{\Gamma(\beta-\alpha)} 
\int_1^{(0+)} e^{Zt}(-t)^{\alpha - 1}(1-t)^{\beta-\alpha-1} \, 
\mathrm{d}t
\label{IRK1bis}
\end{equation}
of the Confluent Hypergeometric function $\Phi(\alpha;\beta;\cdot)$ for 
$\mathrm{Re}(\beta - \alpha) > 0$ (see \cite{ERD81}, Sect.6.11.1, (3))), where the integration contour is specified as in the Corollary (see 
Fig.\ref{IC}, red solid line). On account of (\ref{IRK1bis}) applied to 
$\alpha = b\nu$ and  $\beta = b \in \mathbb{N}^*$ with 
$\mathrm{Re}(\nu) < 1$, expression (\ref{Sol0}) now reads
\begin{align}
\mathfrak{G}^*_{E}(z) & = \frac{1-x}{2i\pi \, x} \, e^{z} 
\int_1^{(0)^+} \frac{e^{-xzt} \, \mathrm{d}t}{t(t-1)} 
\sum_{b \geqslant 1} (-1)^b \frac{K_b}{b!} \, 
(xz(-t)^{\nu}(1-t)^{1-\nu})^b
\nonumber \\
& = \frac{1-x}{2i\pi x} \, e^{z} 
\int_1^{(0+)} \frac{e^{-xtz}}{t(t-1)} \, 
K \left ( x \, z \, (-t)^{\nu}(1-t)^{1-\nu} \right ) \, \mathrm{d}t
\label{Sol1bis}
\end{align}
for all $z \in \mathbb{C}$. As $E^*(z) = \mathfrak{G}_E^*(z)$ by definition, expression (\ref{Sol1bis}) readily yields the final representation (\ref{SolEI0bis}), as claimed.
\end{proof}

\begin{figure}[htb]
\scalebox{0.8}{\includegraphics[width=14cm, trim = 4cm 7.5cm 3cm 
4.1cm,clip]
{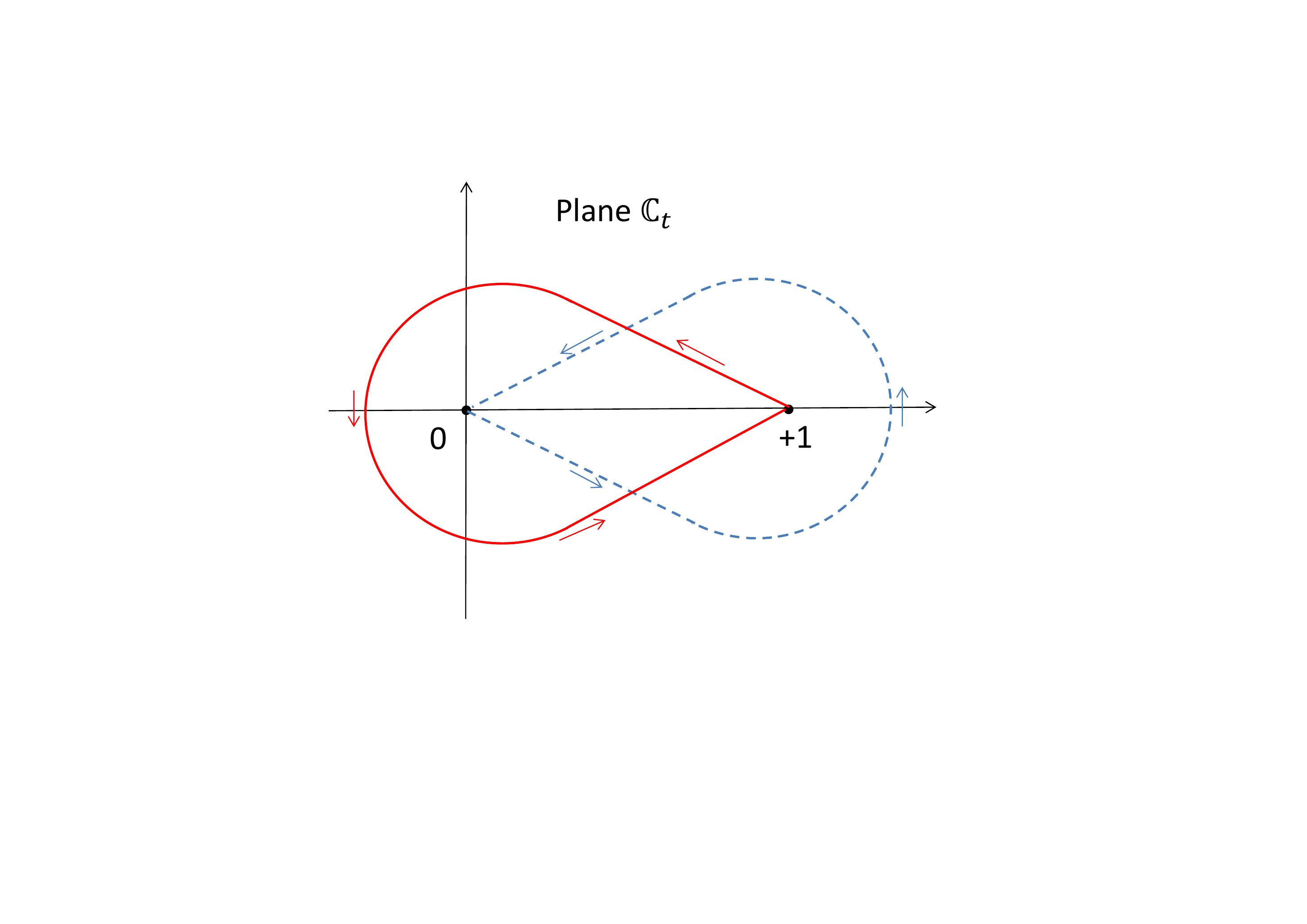}}
\caption{\textit{Integration contours around points 0 and 1.}}
\label{IC}
\end{figure}

By the factorization (\ref{DefTTbis}), it is consequently deduced that the inverse of integral operator $\mathfrak{M}$ is given by
$$
\mathfrak{M}^{-1}f(z) = \frac{x \, \nu(1-\nu)U}{1-x} \cdot 
\mathfrak{L}^{-1}(z \, f')(z), \qquad z \in \mathbb{C},
$$
for all $f \in \mathscr{H}_0$, with inverse $\mathfrak{L}^{-1}$ provided by integral representation (\ref{SolEI0bis}). The involvement of the derivative $f'$ for the inverse $\mathfrak{M}^{-1}f$ above reminds us of formula (\ref{SolAbel}) in the particular case of the Abel equation. 

\begin{remark}
$\bullet$ As mentioned in the latter proof, representation 
(\ref{SolEI0bis}) for the inverse $\mathfrak{L}^{-1}$ is actually valid for $\mathrm{Re}(\nu) < 1$, although the operator $\mathfrak{L}$ is defined on space $\mathscr{H}_0$ for $\mathrm{Re}(\nu) < 0$ only. By the variable change $t \mapsto 1-t$, representation (\ref{SolEI0bis}) can be easily written as
\begin{equation}
\mathfrak{L}^{-1}K(z) = \frac{x-1}{2i\pi x} \, e^{(1-x)z} 
\int_0^{(1+)} \frac{e^{xtz}}{t(1-t)} \, 
K \left ( xz \, t^{1-\nu}(t-1)^\nu \right ) \, \mathrm{d}t,
\quad z \in \mathbb{C},
\label{SolEI0}
\end{equation}
where the contour in integral (\ref{SolEI0}) in variable $t$ is a 
loop starting and ending at point $t = 0$, and encircling point $t = 1$ once in the positive sense (see Fig.\ref{IC}, blue dotted line).  Alternative representation (\ref{SolEI0}) generally holds, however, for 
$\mathrm{Re}(\nu) < 0$ only (with no extension to positive values of 
$\mathrm{Re}(\nu)$).

$\bullet$ To illustrate the fact that the operator $\mathfrak{L}$, although well-defined for $\mathrm{Re}(\nu) < 0$, may not exist for other values of parameter $\nu$, consider the particular function 
$f \in \mathscr{H}_0$ defined by $f(z) = z \, e^{(1-x)z}$, 
$z \in \mathbb{C}$. Using definition (\ref{DefTT}), it is easily verified that, for $\mathrm{Re}(\nu) < 0$, its image 
$\mathfrak{L}f \in \mathscr{H}_0$ is given by
$$
\mathfrak{L}f(z) = - \, \frac{z}{1-x} \, 
\Phi \left ( 1 - \frac{1}{\nu};1 - \frac{1}{\nu}; -z \right ), 
\qquad z \in \mathbb{C}
$$
(where $\Phi(\alpha;\beta;\cdot)$ denotes the Confluent Hypergeometric function with parameters $\alpha, \, \beta \notin \mathbb{N}$); for 
$\mathrm{Re}(\nu) > 0$, however, its image is given by
$$
\mathfrak{L}f(z) = - \, \frac{1-\nu}{\nu(1-x)} \, 
z^{\frac{1}{\nu}} \, \Gamma \left ( 1 - \frac{1}{\nu}; z \right ), 
\qquad z \neq 0
$$
(where $\Gamma(\cdot;\cdot)$ denotes the incomplete Gamma function), so that $\mathfrak{L}f \notin \mathscr{H}_0$ in this case.
\end{remark}

\section{Appendix}


\subsection{Proof of Lemma \ref{lemm1}}
\label{A1}
\textbf{a)} By the reflection formula 
$\Gamma(z)\Gamma(1-z) = \pi/\sin(\pi \, z)$, $z \notin -\mathbb{N}$ 
(\cite{NIST10}, §5.5.3), applied to the argument $z = r-\mu$, the generic term $d_r(\lambda,\mu)$ of the sum $D_N(\lambda,\mu)$ equivalently reads
$$
d_r(\lambda,\mu) = \frac{(-1)^r}{\Gamma(1+r-\lambda)\Gamma(1-r+\mu)} = 
- \frac{\sin(\pi \mu)}{\pi} \, \frac{\Gamma(r-\mu)}{\Gamma(1+r-\lambda)}
$$
and Stirling's formula (\cite{NIST10}, §5.11.3) entails that 
$d_r(\lambda,\mu) = O(r^{\lambda-\mu-1})$ for large $r$; the series 
$\sum_{r \geqslant 0} d_r(\lambda,\mu)$ is thus  convergent if and only if 
$\Re(\mu) > \Re(\lambda)$. Write then the finite sum $D_N(\lambda,\mu)$ as the difference
\begin{align}
& \, \sum_{r=0}^{+\infty} \frac{(-1)^r}{\Gamma(1+r-\lambda)\Gamma(1-r+\mu)} - 
\sum_{r=N}^{+\infty}\frac{(-1)^r}{\Gamma(1+r-\lambda)\Gamma(1-r+\mu)} \; = 
\nonumber \\
& \, \sum_{r=0}^{+\infty}\frac{(-1)^r}{\Gamma(1+r-\lambda)\Gamma(1-r+\mu)} - 
\sum_{r=0}^{+\infty}\frac{(-1)^{r+N}}{\Gamma(1+r+N-\lambda)\Gamma(1-r-N+\mu)};
\nonumber
\end{align}
applying similarly the reflection formula to the argument $z = r-\mu+N$ for the second sum, we obtain
\begin{align}
D_N(\lambda,\mu) & \, = \frac{\sin(\pi \, \mu)}{\pi} 
\left [ \sum_{r=0}^{+\infty} \frac{\Gamma(r-\mu+N)}{\Gamma(1+r+N-\lambda)} 
- \sum_{r=0}^{+\infty} \frac{\Gamma(r-\mu)}{\Gamma(1+r-\lambda)} \right ]
\nonumber \\
& \, = \frac{\sin(\pi \, \mu)}{\pi} 
\left [ \sum_{r=0}^{+\infty} 
\frac{(N-\mu)_r\Gamma(r-\mu)}{(1+N-\lambda)_r\Gamma(1+N-\lambda)} 
- \sum_{r=0}^{+\infty} \frac{(-\mu)_r\Gamma(-\mu)}{(1-\lambda)_r\Gamma(1-\lambda)} \right ]
\nonumber
\end{align}
when introducing Pochhammer symbols of order $r$, hence
\begin{align}
D_N(\lambda,\mu) = \frac{\sin(\pi \, \mu)}{\pi} 
\Bigl [ & \, \frac{\Gamma(N-\mu)}{\Gamma(1+N-\lambda)} \, F(1,N-\mu;1+N-\lambda;1) \; - 
\nonumber \\
& \, \frac{\Gamma(-\mu)}{\Gamma(1-\lambda)} \, F(1,-\mu;1-\lambda;1) \Bigr ]
\nonumber
\end{align}
after the definition of the Hypergeometric function $F$. Now, recall the identity (\cite{GRAD07}, §9.122.1)
\begin{equation}
F(\alpha,\beta;\gamma;1) = \frac{\Gamma(\gamma)\Gamma(\gamma-\alpha-\beta)}
{\Gamma(\gamma-\alpha)\Gamma(\gamma-\beta)}, \qquad 
\Re(\gamma) > \Re(\alpha + \beta);
\label{HyperF1}
\end{equation}
when aplying (\ref{HyperF1}) to the values $\alpha = 1$, $\beta = N-\mu$, 
$\gamma = 1 + N -\lambda$ (resp. $\alpha = 1$, $\beta = -\mu$, 
$\gamma = 1 - \lambda$), the latter sum $D_N(\lambda,\mu)$ consequently reduces to 
\begin{equation}
D_N(\lambda,\mu) = \frac{\sin(\pi \, \mu)}{\pi} 
\frac{\Gamma(\mu-\lambda)}{\Gamma(1-\lambda+\mu)} 
\left [ \frac{\Gamma(N-\mu)}{\Gamma(N-\lambda)} - \frac{\Gamma(-\mu)}{\Gamma(-\lambda)} \right ], \quad \Re(\mu) > \Re(\lambda).
\label{A11}
\end{equation}
By the reflection formula for function $\Gamma$ again, we have
$$
\Gamma(N-\mu)\Gamma(1-N+\mu) = - \frac{(-1)^N \pi}{\sin(\pi \mu)}, \qquad 
\Gamma(-\mu)\Gamma(1+\mu) = - \frac{\pi}{\sin(\pi \mu)},
$$
so that expression (\ref{A11}) eventually yields 
\begin{align}
D_N(\lambda,\mu) & \, = - \frac{\Gamma(\mu-\lambda)}{\Gamma(1-\lambda+\mu)} 
\left [ \frac{(-1)^N}{\Gamma(N-\lambda)\Gamma(1-N+\mu)} - 
\frac{1}{\Gamma(-\lambda)\Gamma(1+\mu)} \right ] 
\nonumber \\
& \, = \frac{1}{\lambda-\mu} 
\left [ \frac{(-1)^N}{\Gamma(N-\lambda)\Gamma(1-N+\mu)} - 
\frac{1}{\Gamma(-\lambda)\Gamma(1+\mu)} \right ]
\nonumber
\end{align}
which states the first identity (\ref{Sn}) for $\Re(\mu) > \Re(\lambda)$.

\textbf{b)} Besides, the reflection formula of function $\Gamma$ applied to 
$z = r-\lambda$ enables us to write $D_N(\lambda,\lambda)$ as
\begin{align}
D_N(\lambda,\lambda) & \, = 
\sum_{r=0}^{N-1} \frac{(-1)^r}{\Gamma(1+r-\lambda)\Gamma(1-r+\lambda)} 
= - \frac{\sin(\pi \lambda)}{\pi} \sum_{r=0}^{N-1} 
\frac{\Gamma(r-\lambda)}{\Gamma(1-r+\lambda)} 
\nonumber \\
& \, = - \frac{\sin(\pi \lambda)}{\pi} \sum_{r=0}^{N-1} \frac{1}{r-\lambda} 
= \frac{\sin(\pi \lambda)}{\pi} 
\left [ \psi(-\lambda) - \psi(N-\lambda) \right ]
\nonumber
\end{align}
after the expansion formula (\cite{NIST10}, Chap.5, §5.7.6) for function 
$\psi$ and the second identity (\ref{Sn}) for $\mu = \lambda$ follows.

\textbf{c)} The first identity (\ref{Sn}) stated for 
$\Re(\mu) > \Re(\lambda)$ defines an analytic function of variables 
$\lambda \in \mathbb{C}$ and $\mu \in \mathbb{C}$ for $\mu \neq \lambda$; besides, it is easily verified that this function has the limit given by 
$D_N(\lambda,\lambda)$ when $\mu \rightarrow \lambda$. On the other hand, the finite sum $D_N(\lambda,\mu)$ defines itself an entire function of 
$\lambda \in \mathbb{C}$ and $\mu \in \mathbb{C}$; by analytic continuation, identity (\ref{Sn}) consequently holds for any pair 
$(\lambda,\mu) \in \mathbb{C} \times \mathbb{C}$ $\blacksquare$

\subsection{Proof of Lemma \ref{lemmU}}
\label{A3}
\textbf{a)} We first determine the convergence radius of the power series 
$\pmb{\Sigma}(w)$ in terms of complex parameter $\nu$. For large $b$, 

$\bullet$ if $1 - \nu \notin \; ]-\infty,0]$ and $-\nu \notin \; ]-\infty,0]$, that is, if $\nu \in \mathbb{C} \setminus [0,+\infty[$, the generic term 
$\sigma_b$ of this series is asymptotic to
$$
\sigma_b = \frac{\Gamma(b(1-\nu))}{\Gamma(b)\Gamma(1-b\nu)} 
= - \frac{1}{\nu} \cdot \frac{\Gamma(b(1-\nu))}{b! \, \Gamma(-b\nu)} 
\sim - \sqrt{\frac{-\nu}{2\pi(1-\nu)b}} \, e^{b \cdot \varphi^-(\nu)}
$$
after Stirling's formula $\Gamma(z) \sim \sqrt{2\pi} e^{z \log z -z}/\sqrt{z}$ for large $z$ with $\vert \mathrm{arg}(z) \vert \leqslant \pi - \eta$, 
$\eta > 0$ (\cite{NIST10}, Chap.5, 5.11.3), and where we set  
$\varphi^-(\nu) = (1-\nu) \log (1-\nu) + \nu \log(-\nu)$;

$\bullet$ if $1 - \nu \notin \; ]-\infty,0]$ and $\nu \in [0,+\infty[$ (the parameter $\nu$ is consequently real), that is, $0 \leqslant \nu < 1$, write 
$\Gamma(1-b\nu) = \pi / [\sin(\pi b \nu) \Gamma(b\nu)]$ after the reflection formula so that the generic term $\sigma_b$ is now asymptotic to
$$
\sigma_b = - \frac{1}{\nu} \cdot \frac{\Gamma(b(1-\nu))}{b! \, \pi} 
\Gamma(b\nu) \, \sin(\pi b \nu) \sim 
- \frac{1}{\nu} \sqrt{\frac{\pi}{2\nu(1-\nu)b^3}} \, \sin(\pi b \nu) 
\, e^{b \cdot \varphi(\nu)}
$$
after Stirling's formula (ibid.) and where
$\varphi(\nu) = (1-\nu) \log (1-\nu) + \nu \log(\nu)$; 

$\bullet$ finally if $\nu - 1 \in [0,+\infty]$, that is, if $\nu \geqslant 1$, write $\Gamma(1-b\nu) = \pi / [\sin(\pi b \nu) \Gamma(b\nu)]$ together with  
$\Gamma(1-b(1-\nu)) = \pi / [\sin(\pi b (1-\nu)) \Gamma(b(1-\nu))]$ after the reflection formula so that the generic term $\sigma_b$ is asymptotic to
$$
\sigma_b = \frac{(-1)^{b-1}}{\nu} \cdot 
\frac{\Gamma(b\nu)}{b! \, \Gamma(1-b(1-\nu))} \sim 
\frac{(-1)^{b-1}}{\nu} \sqrt{\frac{1}{2\pi \, \nu(\nu - 1)b^3}} \, 
e^{b \cdot \varphi^+(\nu)}
$$
after Stirling's formula and where
$\varphi^+(\nu) = (1-\nu) \log (\nu-1) + \nu \log(\nu)$. 

\textbf{b)} By the latter discussion, it therefore follows that the power series 
$\pmb{\Sigma}(w)$ has the finite convergence radius 
$R(\nu) = \vert e^{-\psi(\nu)} \vert$ with $\psi(\nu)$ 
given as in Lemma \ref{lemmU}.

Now, by the above expression of $\sigma_b$ for 
$\nu \in \mathbb{C} \setminus [0,+\infty[$, write
\begin{equation}
\sigma_b = - \frac{1}{\nu} \cdot \frac{\Gamma(b(1-\nu))}{b! \, \Gamma(-b\nu)}  = - \frac{1}{\nu} \cdot \binom{-1 + b(1-\nu)}{b} = - \frac{1}{\nu} \cdot 
\binom{\alpha + b\beta}{b}
\label{DefSig0}
\end{equation}
for all $b \geqslant 1$, where we set $\alpha = -1$ and $\beta = 1-\nu$. From 
(\cite{PolSze72}, Problem 216, p.146, p. 349), it is known that 
\begin{equation}
1 + \sum_{b \geqslant 1} \binom{\alpha + b\beta}{b} w^b = 
\frac{\Theta(w)^{\alpha+1}}{(1-\beta)\Theta(w) + \beta}
\label{PolSzeg0}
\end{equation}
for any pair $\alpha$ and $\beta$, where $\Theta(w)$ denotes the unique solution to the implicit equation $1 - \Theta + w\, \Theta^\beta = 0$
with $\Theta(0) = 1$. By expression (\ref{DefSig0}) and relation 
(\ref{PolSzeg0}) applied to the specific values $\alpha = -1$ and 
$\beta = 1-\nu$, we can consequently assert that the series $\pmb{\Sigma}(w)$ equals
$$
\pmb{\Sigma}(w) = \sum_{b \geqslant 1} \sigma_b \, w^b = - \frac{1}{\nu} 
\left [ \frac{1}{\nu \, \Theta(w) + 1-\nu} - 1 \right ] = 
\frac{\Theta(w)-1}{\nu \, \Theta(w) + 1-\nu}
$$
for $\vert w \vert < R(\nu)$, as claimed. The validity of equality 
(\ref{U0}) for real $\nu \in [0,+\infty[$ follows by analytic continuation. 

\textbf{c)} As a complement, we finally verify that $\pmb{\Sigma}$ is a solution to differential equation (\ref{EDiff0}). Differentiating each side of the implicit relation (\ref{DefTheta}) at point $w \neq 0$ gives
$$
-\Theta'(w) + \Theta(w)^{1-\nu} + w \, \Theta'(w) \Theta(w)^{-\nu}(1-\nu) = 0
$$
hence 
\begin{align}
\Theta'(w) = & \, \frac{\Theta(w) ^{1-\nu}}{1 - w \, \Theta(w)^{-\nu}(1-\nu)} 
= \frac{(\Theta(w)-1)/w}{1 - w(\Theta(w)-1)(1-\nu)/w\Theta(w)} 
\nonumber \\
= & \, \frac{\Theta(w)}{w} \, \frac{\Theta(w)-1}{\nu \, \Theta(w) + 1 - \nu}
\nonumber 
\end{align}
after using relation (\ref{DefTheta}) again for $\Theta(w)^{1-\nu}$; using  relation (\ref{U0}), the latter expression for $\Theta'(w)$ consequently reduces to 
\begin{equation}
\Theta'(w) = \frac{\Theta(w)}{w} \, \pmb{\Sigma}(w).
\label{Theta1}
\end{equation}
Now, differentiating (\ref{U0}) at point $w$ and using (\ref{Theta1}) yields
\begin{equation}
\pmb{\Sigma}'(w) = \frac{\Theta'(w)}{(\nu \, \Theta(w) + 1 - \nu)^2} = 
\frac{\Theta(w) \, \pmb{\Sigma}(w)}{w(\nu \, \Theta(w) + 1 - \nu)^2};
\label{Theta2}
\end{equation}
but solving (\ref{U0}) for $\Theta(w)$ in terms of $\pmb{\Sigma}(w)$ readily gives the rational expressions
$$
\Theta(w) = \frac{(1-\nu)\pmb{\Sigma}(w) + 1}{1-\nu \, \pmb{\Sigma}(w)}, 
\qquad \nu \Theta(w) + 1 - \nu = \frac{1}{1 - \nu \pmb{\Sigma}(w)}
$$ 
which, once replaced into the right-hand side of (\ref{Theta2}), entail
$$
\pmb{\Sigma}'(w) = 
\frac{\displaystyle \frac{(1-\nu)\pmb{\Sigma}(w)+1}{1-\nu \, \pmb{\Sigma}(w)} \times 
\pmb{\Sigma}(w)}{w \displaystyle \left ( \frac{1}{1 - \nu \pmb{\Sigma}(w)} \right )^2}
$$
and readily provide differential equation (\ref{EDiff0}) after algebraic reduction $\blacksquare$

\subsection{Proof of Proposition \ref{LemT0} (continued)}
\label{A4}
We conclude the proof of Proposition \ref{LemT0} by expressing the coefficients $Q_{b,\ell}$, $1 \leqslant \ell \leqslant b$, introduced in (\ref{DefQbell}) in terms of Hypergeometric polynomials only. We first calculate coefficients $Q_{b,\ell}(s)$, $1 \leqslant \ell \leqslant b$, in terms of the general Gauss Hypergeometric function $F$. Recall that 
$F = F(\alpha,\beta;\gamma;\cdot)$ has the integral representation (\cite{NIST10}, Chap.15, 15.6.1)
\begin{equation}
F(\alpha,\beta;\gamma;z) = 
\frac{\Gamma(\gamma)}{\Gamma(\beta)\Gamma(\gamma-\beta)}
\int_0^1 \frac{t^{\beta-1}(1-t)^{\gamma-\beta-1}}{(1-z t)^\alpha} 
\, \mathrm{d}t, \quad \vert z \vert < 1,
\label{HyperGauss}
\end{equation}
for real parameters $\alpha$, $\beta$, $\gamma$ where 
$\gamma > \beta > 0$.

\begin{lemma}
\textbf{We have}
\begin{align}
Q_{b,\ell} = & \, 
- \frac{\Gamma(\ell)\Gamma(1 - b \, \nu)}{\Gamma(\ell + 1 - b \, \nu)} 
\, \left ( \frac{x}{1-x}  \right ) \, 
\Bigl [  \, \nu \, (b-\ell) \; \times 
\nonumber \\
& \,F(b \, (1-\nu),\ell;\ell + 1 - b \, \nu;1-x)  + (\ell - b \, \nu) \times 
F(b \, (1-\nu),\ell;\ell - b \, \nu;1-x) 
\, \Bigr ]
\label{CoeffQ}
\end{align}
\textbf{for $1 \leqslant \ell \leqslant b$.}
\label{CoeffT0}
\end{lemma}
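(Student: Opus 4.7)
The approach is to express each of the two integrals in the decomposition $Q_{b,\ell} = (\ell+1-b)\,M_{b,\ell} - \ell\, c\, M_{b,\ell-1}$ from (\ref{DefQbell}) (with $c = (1-\nu x)/(1-x)$) as a Hypergeometric function via the Euler integral representation (\ref{HyperGauss}), and then to apply one Gauss contiguous relation in the third parameter to collapse the result into the form (\ref{CoeffQ}).

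First, I would apply (\ref{HyperGauss}) directly to the integral (\ref{DefMbell}) defining $M_{b,\ell}$: matching $\beta = \ell+1$, $\gamma-\beta-1 = -b\nu$ (so $\gamma = \ell+2-b\nu$), $\alpha = b(1-\nu)$, and $z = 1-x$ gives
\[
M_{b,\ell} = \frac{\Gamma(\ell+1)\Gamma(1-b\nu)}{\Gamma(\ell+2-b\nu)}\, F(b(1-\nu),\ell+1;\ell+2-b\nu;1-x),
\]
and similarly $M_{b,\ell-1}$ carries the factor $F(b(1-\nu),\ell;\ell+1-b\nu;1-x)$ (the convergence condition $\mathrm{Re}(\gamma-\beta)>0$ holds because $\mathrm{Re}(\nu)<0$). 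Since the target (\ref{CoeffQ}) only involves Hypergeometrics with second parameter $\beta=\ell$, I would next use the elementary identity $\zeta^\ell = \zeta^{\ell-1} - \zeta^{\ell-1}(1-\zeta)$ inside the integrand of $M_{b,\ell}$ to write $M_{b,\ell} = M_{b,\ell-1} - N_{b,\ell-1}$, where $N_{b,\ell-1}$ carries an extra factor $(1-\zeta)$; applying (\ref{HyperGauss}) to $N_{b,\ell-1}$ produces $\frac{\Gamma(\ell)\Gamma(2-b\nu)}{\Gamma(\ell+2-b\nu)}\, F(b(1-\nu),\ell;\ell+2-b\nu;1-x)$. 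At this stage $Q_{b,\ell}$ has become a linear combination of exactly two Hypergeometrics, both with second parameter $\ell$ but with third parameters $\ell+1-b\nu$ and $\ell+2-b\nu$.

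The remaining step would be to invoke Gauss' three-term contiguous relation in the $\gamma$-parameter,
\[
\gamma(\gamma-1)(z-1)F(\alpha,\beta;\gamma-1;z)+\gamma[\gamma-1-(2\gamma-\alpha-\beta-1)z]F(\alpha,\beta;\gamma;z)+(\gamma-\alpha)(\gamma-\beta)\,z\,F(\alpha,\beta;\gamma+1;z)=0,
\]
applied with $(\alpha,\beta,\gamma,z)=(b(1-\nu),\ell,\ell+1-b\nu,1-x)$. This expresses the $\gamma=\ell+2-b\nu$ Hypergeometric in terms of the ones at $\gamma=\ell+1-b\nu$ and $\gamma=\ell-b\nu$. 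Using $\gamma-\alpha=\ell+1-b$, $\gamma-\beta=1-b\nu$, and $2\gamma-\alpha-\beta-1=\ell+1-b-b\nu$, I would substitute and collect like terms. The Gamma prefactors reduce through $\Gamma(2-b\nu)=(1-b\nu)\Gamma(1-b\nu)$ and $\Gamma(\ell+2-b\nu)=(\ell+1-b\nu)\Gamma(\ell+1-b\nu)$ to the single factor $\Gamma(\ell)\Gamma(1-b\nu)/\Gamma(\ell+1-b\nu)$ of (\ref{CoeffQ}); the coefficient of $F(\cdot;\ell-b\nu;\cdot)$ is immediately $-(\ell-b\nu)x/(1-x)$; and the coefficient of $F(\cdot;\ell+1-b\nu;\cdot)$ reduces, after cancelling $\ell c(1-x)=\ell(1-\nu x)$ and using the algebraic identity $b\nu(1-x)+(\ell-b\nu)-\ell(1-\nu x)=\nu x(\ell-b)$, to $-\nu(b-\ell)x/(1-x)$, matching (\ref{CoeffQ}) exactly.

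The main obstacle I anticipate is purely bookkeeping: tracking the several parameter shifts across two applications of (\ref{HyperGauss}) and the contiguous relation, and then verifying the cancellation that collapses the coefficient of $F(\cdot;\ell+1-b\nu;\cdot)$ to its clean factored form $\nu(\ell-b)$. Apart from this algebraic check, the argument is a direct substitution.
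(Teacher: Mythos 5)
Your proposal is correct and follows essentially the same route as the paper: Euler's integral representation (\ref{HyperGauss}) for $M_{b,\ell}$, a reduction of the second parameter from $\ell+1$ to $\ell$ (your in-integrand splitting $\zeta^\ell=\zeta^{\ell-1}-\zeta^{\ell-1}(1-\zeta)$ is exactly how the paper derives its identity (\ref{ID1})), the same $\gamma$-contiguity relation (\cite{NIST10}, 15.5.18), and the same final cancellation $b\nu(1-x)+(\ell-b\nu)-\ell(1-\nu x)=\nu x(\ell-b)$. No gaps; the bookkeeping you flag is indeed the only work remaining, and it goes through as in the paper's Appendix \ref{A4}.
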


\begin{proof}
To calculate the integral $M_{b,\ell}$ introduced in (\ref{DefMbell}), use the variable change $\zeta = t \cdot U$, 
$0 \leqslant t \leqslant 1$, to write
$$
M_{b,\ell} = \int_0^1 
t^\ell (1-t)^{-b \, \nu} 
\left ( 1 - (1-x)t \right )^{b \, (\nu - 1)} \, 
\mathrm{d}t;
$$
using representation (\ref{HyperGauss}) for parameters 
$\alpha = -b(1-\nu)$, $\beta = \ell + 1$, $\gamma = 2 + \ell - b \, \nu$, this integral reduces to
\begin{equation}
M_{b,\ell} = 
\frac{\Gamma(\ell+1)\Gamma(1- b \, \nu)}{\Gamma(2+\ell -  b \, \nu)} \, 
F(b \, (1-\nu),\ell+1;2+\ell-b \, \nu;1-x);
\label{MBL0}
\end{equation}
after (\ref{MBL0}) and the expression (\ref{DefQbell}) of coefficient 
$Q_{b,\ell}$, we then derive
\begin{align}
Q_{b,\ell} = & \, 
\frac{\Gamma(\ell)\Gamma(1 - b \, \nu)}{\Gamma(\ell + 1 - b \, \nu)} 
\, \times 
\nonumber \\
& \, \Bigl [  \frac{\ell}{\ell+1-b \, \nu} (\ell+1-b) \cdot 
F(b \, (1-\nu),\ell+1;\ell + 2 - b \, \nu;1-x) 
\nonumber \\
& \; - \ell \, c \cdot F(b \, (1-\nu),\ell;\ell + 1 - b \, \nu;1-x) 
\, \Bigr ].
\label{MBL1}
\end{align}
To simplify further the latter expression, first invoke the identity 
\begin{equation}
\beta \, F(\alpha,\beta+1;\gamma+1;z) = \gamma \, 
F(\alpha,\beta;\gamma;z) - 
(\gamma-\beta) \, F(\alpha,\beta;\gamma+1;z)
\label{ID1}
\end{equation}
easily derived from representation (\ref{HyperGauss}) for 
$F(\alpha,\beta+1;\gamma+1;z)$, after splitting the factor $t^\beta$ of the integrand into $t^\beta = t^{\beta-1} - t^{\beta-1}(1-t)$. Applying 
(\ref{ID1}) to $\alpha = b \, (1-\nu)$, $\beta = \ell$ and 
$\gamma = \ell + 1 - b \, \nu$ then enables one to express the term 
$F(b \, (1-\nu),\ell+1;\ell + 2 - b \, \nu;1-x)$ in the r.h.s. of (\ref{MBL1}) as a combination of 
$F(b \, (1-\nu),\ell;\ell + 1 - b \, \nu;1-x)$ and 
$F(b \, (1-\nu),\ell;\ell + 2 - b \, \nu;1-x)$ hence, after simple algebra,
\begin{align}
Q_{b,\ell} = & \, 
\frac{\Gamma(\ell)\Gamma(1 - b \, \nu)}{\Gamma(\ell + 1 - b \, \nu)} 
\, \Bigl [ \left\{ (\ell + 1 - b) - \ell \, c \right \} \cdot 
F(b \, (1-\nu),\ell;\ell + 1 - b \,\nu;1-x) 
\nonumber \\
& \, - \frac{(\ell+1-b)(1-b \, \nu)}{\ell + 1 - b \, \nu} \cdot 
F(b \, (1-\nu),\ell;\ell + 2 - b \, \nu;1-x) \, \Bigr ].
\label{MBL2}
\end{align}
Furthermore, the contiguity identity (\cite{NIST10}, 15.5.18) 
\begin{align}
& \gamma[\gamma-1-(2\gamma-\alpha-\beta-1)z] \, F(\alpha,\beta;\gamma;z) 
\; + 
\nonumber \\
& (\gamma-\alpha)(\gamma-\beta)z \, F(\alpha,\beta;\gamma+1;z) = 
\gamma(\gamma-1)(1-z) \, F(\alpha,\beta;\gamma-1;z)
\end{align}
applied to $\alpha = b \, (1-\nu)$, $\beta = \ell$ and 
$\gamma = \ell + 1 - b \, \nu$ allows us to write the last term 
$F(b \, (1-\nu),\ell;\ell + 2 - b \, \nu;1-x)$ in the bracket of the r.h.s. of (\ref{MBL2}) as a combination of $F(b \, (1-\nu),\ell;\ell - b \, \nu;1-x)$ and $F(b \, (1-\nu),\ell;\ell + 1 - b \, \nu;1-x)$, that is,
\begin{align}
& F(b \, (1-\nu),\ell;\ell + 2 - b \, \nu;1-x) = 
\frac{\ell + 1 - b \, \nu}{(\ell + 1 - b)(1-b \, \nu)(1-x)} \; \times 
\nonumber \\
& \Bigl [ (\ell - b \, \nu)x \cdot 
F(b \, (1-\nu),\ell;\ell - b \, \nu;1-x) \; - 
\nonumber \\
& [\ell - b \, \nu \; - (\ell + 1 - b - b \, \nu)(1-x)] \cdot 
F(b \, (1-\nu),\ell;\ell + 1 - b \, \nu;1-x) \Bigr ];
\nonumber
\end{align}
inserting the latter relation into the right-hand side of (\ref{MBL2}) then yields
\begin{align}
Q_{b,\ell} = & \, 
\frac{\Gamma(\ell)\Gamma(1 - b \, \nu)}{\Gamma(\ell + 1 - b \, \nu)} 
\, \times \Bigl [ T_{b,\ell} \cdot 
F(b \, (1-\nu),\ell;\ell + 1 - b \, \nu;1-x) \; -  
\nonumber \\
& \, \frac{U}{1-x}(\ell - b \, \nu)x \cdot 
F(b \, (1-\nu),\ell;\ell - b \, \nu;1-x) \, \Bigr ]
\label{MBL3}
\end{align}
where 
$$
T_{b,\ell} = b \, \nu - \ell \, c + \frac{(\ell - b \, \nu)}{1-x} = 
\frac{(\ell - b) \nu x}{1-x}
$$
after the definition $c = (1-\nu x)/(1-x)$ of constant $c$. Inserting this value of $T_{b,\ell}$ in the right-hand side of (\ref{MBL3}) readily provides expression (\ref{CoeffQ}) for $Q_{b,\ell}$. 
\end{proof}

We finally show how coefficient $Q_{b,\ell}$ can be written in terms of a Hypergeometric polynomial only. Applying the general identity (\cite{GRAD07}, Chap.9, 9.131.1)
\begin{equation}
F(\alpha,\beta;\gamma;z) = (1-z)^{\gamma-\alpha-\beta}
F(\gamma-\alpha,\gamma-\beta;\gamma;z), \qquad \vert z \vert < 1,
\label{GI0}
\end{equation}
to each term $F(b(1-\nu),\ell;\ell+1-b\nu;1-x)$ and 
$F(b(1-\nu),\ell;\ell-b\nu;1-x)$ in (\ref{CoeffQ}), we obtain
\begin{equation}
Q_{b,\ell} = 
- \frac{\Gamma(\ell)\Gamma(1 - b \, \nu)}{\Gamma(\ell + 1 - b \, \nu)} 
\cdot \frac{x^2}{b (1-x)} (\ell - b\nu) \, \times 
R_{b,\ell}, 
\qquad b \geqslant \ell \geqslant 1, 
\label{Q1}
\end{equation}
where we set
\begin{align} 
R_{b,\ell} = & \; \frac{b\nu \, (b-\ell)}{\ell - b \nu} x^{-b} \cdot 
F(\ell - b + 1,-b\nu + 1;\ell - b\nu + 1;1-x) \; + 
\nonumber \\
& \; b \, x^{-b-1} \cdot F(\ell-b,-b\nu;\ell-b\nu;1-x).
\nonumber
\end{align}
From the identity (\cite{NIST10}, Chap.15, §15.5.1) 
\begin{equation}
\frac{\mathrm{d}}{\mathrm{d}z} \, F(\alpha,\beta;\gamma;z) = 
\frac{\alpha \beta}{\gamma} \, 
F(\alpha+1,\beta+1;\gamma+1;z), \qquad \vert z \vert < 1,
\label{DF}
\end{equation}
applied to parameters $\alpha = \ell - b$, $\beta = -b\nu$ and 
$\gamma= \ell - b\nu$, the factor $R_{b,\ell}$ above then equals the derivative 
\begin{align}
R_{b,\ell} = & \; \frac{\mathrm{d}}{\mathrm{d}z} \left [ (1-z)^{-b}F(\ell-b,-b\nu;\ell-b\nu;z) \right ]_{z = 1-x} 
\nonumber \\ = & \; \frac{\mathrm{d}}{\mathrm{d}z} 
\left [ F(b-b\nu,\ell;\ell-b\nu;z) \right ]_{z = 1-x}
\nonumber \\
= & \; \frac{(b-b\nu)\ell}{\ell-b\nu} \, 
F(b-b\nu+1,\ell+1;\ell-b\nu+1;1-x)
\nonumber
\end{align}
hence
\begin{equation}
R_{b,\ell} = \frac{(b-b\nu)\ell}{\ell-b\nu} \, x^{-b-1}
F(\ell - b,-b\nu;\ell-b\nu+1;1-x)
\label{Q1bis}
\end{equation}
where we have successively applied identity (\ref{GI0}), (\ref{DF}) and 
(\ref{GI0}) again to derive the second, third and fourth equality, respectively. Using (\ref{Q1bis}), expression (\ref{Q1}) for 
$Q_{b,\ell}$ then reads
\begin{equation}
Q_{b,\ell} = - \frac{\Gamma(\ell)\Gamma(1-b\nu)}{\Gamma(\ell-b\nu)} \, 
\frac{ x^{1-b}}{1-x} \; 
\frac{\ell(1-\nu)}{\ell - b\nu} \; S_{b,\ell}(\nu;1-x), 
\; \; b \geqslant \ell \geqslant 1,
\label{Q3}
\end{equation}
where we set
$$
S_{b,\ell}(\nu;1-x) = F(\ell-b,-b\nu;\ell-b\nu+1;1-x)
$$
for short. To reduce further $S_{b,\ell}(\nu;1-x)$, invoke the identity 
(\cite{NIST10}, Chap.15, §15.8.7)
\begin{equation}
F(-m,\beta,\gamma;1-x) = 
\frac{\Gamma(\gamma)\Gamma(\gamma-\beta+m)}
{\Gamma(\gamma-\beta)\Gamma(\gamma+m)} \, F(-m,\beta,\beta+1-m-\gamma;x), 
\quad x \in \mathbb{C},
\label{IDFG}
\end{equation}
for any non negative integer $m$ and complex numbers $\beta$, $\gamma$ such that $\Re(\gamma) > \Re(\beta)$; applying (\ref{IDFG}) to factor 
$S_{b,\ell}(\nu;1-x)$ in (\ref{Q3}) then readily gives the final expression 
(\ref{Q0}) for all indexes $b \geqslant \ell \geqslant 1$. This concludes the proof of Proposition \ref{LemT0} $\blacksquare$

\subsection{Proof of Corollary \ref{C3}}
\label{A5}
$\bullet$ From the definition (\ref{DefSS}) of integral operator 
$\mathfrak{M}$, split the integral 
\begin{align}
\mathfrak{M}E^*(z) = & \, 
\int_0^1 e^{- \frac{z}{x} \, t^{-\nu}(1-(1-x)t)} \, 
E^* \left ( \frac{z}{x} t^{-\nu}(1-t) \right ) \frac{\mathrm{d}t}{t} 
\nonumber \\
= & \, \int_0^{\widehat{t}} \; (...) \, \frac{\mathrm{d}t}{t} + 
\int_{\widehat{t}}^1 \; (...) \, \frac{\mathrm{d}t}{t}
\nonumber
\end{align}
over adjacent segments $[0,\widehat{t}]$ and $[\widehat{t},1]$, respectively; applying the variable change $\tau = t^{-\nu}(1-t)$ on each of these two intervals with 
$\tau = \tau_-(t) \Leftrightarrow t = 
\theta_-(\tau) \in [0,\widehat{t}]$ and 
$\tau = \tau_+(t) \Leftrightarrow t = 
\theta_+(\tau) \in [\widehat{t},1]$ by the definition (\ref{Thetapm}) of mappings $\theta_-$ and $\theta_+$, we then successively obtain
\begin{align}
\mathfrak{M}E^*(z) = & \, \int_0^{\widehat{\tau}} 
e^{- \frac{z}{x} \theta_-(\tau)^{-\nu}(1-(1-x)\theta_-(\tau))} 
E^* \left ( \frac{z}{x} \, \tau \right ) \, 
\frac{-\mathrm{d}\tau}{\theta_-(\tau)^{-\nu}(\nu+(1-\nu)\theta_-(\tau))} 
\nonumber \\
+ & \, \int_{\widehat{\tau}}^0
e^{- \frac{z}{x} \theta_+(\tau)^{-\nu}(1-(1-x)\theta_+(\tau))} 
E^* \left ( \frac{z}{x} \, \tau \right ) \, 
\frac{-\mathrm{d}\tau}{\theta_+(\tau)^{-\nu}(\nu+(1-\nu)\theta_+(\tau))}
\nonumber
\end{align}
with $\widehat{\tau} = \tau_-(\widehat{t}) = \tau_+(\widehat{t})$ and the differential 
$\mathrm{d}t/t = -\mathrm{d}\tau/[t^{-\nu}(\nu+(1-\nu)t)]$; this readily reduces to a single integral over segment $[0,\widehat{\tau}]$, that is,
$$
\mathfrak{M}E^*(z) = \int_0^{\widehat{\tau}} 
\left [ \Psi_-(z,\tau) - \Psi_+(z,\tau) \right ]  
E^* \left ( \frac{z}{x} \, \tau \right ) \, \mathrm{d}\tau 
$$
with $\Psi_-(z,\tau)$ and $\Psi_+(z,\tau)$ given as in the Corollary. The final variable change $\xi = (z/x) \cdot \tau$ yields the right-hand side  of (\ref{VOL0}) and the corresponding integral equation. 

$\bullet$ We finally verify that the r.h.s. of (\ref{VOL0}) is well-defined for any $E^* \in \mathscr{H}_0$. The denominator 
$t^{-\nu}(-\nu + (\nu-1)t)$ of $\Psi_-(z,\tau)$ with 
$t = \theta_-(\tau)$ (resp. of $\Psi_+(z,\tau)$ with 
$t = \theta_+(\tau)$) vanishes at either $\tau = 0$ or 
$\tau = \widehat{\tau}$ (resp. at $\tau = \widehat{\tau}$). As to the possible singularity at $\tau = 0$ for $\Psi_-(z,\tau)$, we have 
$\tau \sim t^{-\nu}$ for small $t = \theta_-(\tau)$ so that 
$$
\frac{1}{t^{-\nu}(-\nu + (\nu-1)t)} \sim - \, \frac{t^{\nu}}{\nu} \sim 
- \frac{1}{\nu\tau}, \qquad \tau \downarrow 0;
$$
the product $E^*(z\tau/x) \cdot \Psi_-(z,\tau)$ is thus integrable near 
$\tau = 0$ for any $E^* \in \mathscr{H}_0$, as required. Besides, a Taylor expansion of $\tau = \tau(t)$ at order 2 near $t = \widehat{t}$ gives
$$
\tau = \widehat{\tau} + 
\frac{\tau''(\widehat{t})}{2} \, (t-\widehat{t})^2 
+ o(t-\widehat{t})^2
$$
with $\tau'(\widehat{t}) = 0$ by definition and 
$\tau''(\widehat{t}) < 0$; as a result,
$$
t - \widehat{t} \sim 
\pm \sqrt{\frac{-2(\widehat{\tau}-\tau)}{\tau''(\widehat{t})}}, 
\qquad \tau \uparrow \widehat{\tau}.
$$
The denominator $t^{-\nu}(-\nu+(\nu-1)t)$ of either $\Psi_-(z,\tau)$ or 
$\Psi_+(z,\tau)$ is consequently asymptotic to
$$
t^{-\nu}(-\nu+(\nu-1)t) \sim 
(\widehat{t})^{-\nu}(\nu-1)(t - \widehat{t}) \sim 
\pm (\widehat{t})^{-\nu}(\nu-1) 
\sqrt{\frac{2(\widehat{\tau}-\tau)}{-\tau''(\widehat{t})}}
$$
when $\tau \uparrow \widehat{\tau}$; the singularity of 
$\Psi_-(z,\tau)$ (resp. $\Psi_+(z,\tau)$) at point $\tau = \widehat{\tau}$ is consequently of order 
$$
\Psi_-(z,\tau) = O \left ( \frac{1}{\sqrt{\widehat{\tau}-\tau}} \right ), 
\quad 
\Psi_+(z,\tau) = O \left ( \frac{1}{\sqrt{\widehat{\tau}-\tau}} \right )
$$
and the kernel $\Psi(z,\cdot) = \Psi_-'(z,\cdot) - \Psi_+(z,\cdot)$ is thus integrable at $\tau = \widehat{\tau}$. This ensures that the singular integral (\ref{VOL0}) is well-defined for any $E^* \in \mathscr{H}_0$ $\blacksquare$



\begin{thebibliography}{13}

\bibitem{BIT95} A.V. Bitsadze, \textit{Integral Equations of First Kind}, ed. World Scientific, 1995

\bibitem{EST00} R. Estrada, R.P. Kanwal, \textit{Singular Integral Equations}, ed. Birhäuser, 2000

\bibitem{ERD81} A. Erdelyi, \textit{Higher Transcendental Functions}, Vol.1, 
ed. MacGraw Hill, 1981


\bibitem{GOR91} R. Gorenflo, S. Vessella, \textit{Abel Integral Equations, Analysis and Applications}, Lecture Notes in Mathematics 1461, ed. Springer, 1991

\bibitem{GRAD07} I.S. Gradsteyn, I.M. Ryzhik, \textit{Table of Integrals, Series and Products}, ed. Academic Press, 2007

\bibitem{GQSN18} F. Guillemin, V.K. Quintuna Rodriguez, A. Simonian, R. Nasri, \textit{Sojourn time in a $M^{[X]}/M/1$ Processor Sharing Queue with Batch Arrivals (II)}, In Preparation, 2018

\bibitem{Kratten96} C. Krattenthaler, \textit{A new Matrix Inverse}, Proceedings of the American Mathematical Society 124, pp.47-59, 1996

\bibitem{KRESS14} R. Kress, \textit{Linear Integral Equations}, Third edition, ed. Springer 2014

\bibitem{NIST10} National Institute of Standards and Technology, 
\textit{NIST Handbook of Mathematical Functions}, ed. Cambridge University Press, 2010

\bibitem{PolSze72} G. Polya, G. Szego, \textit{Problems and Theorems in Analysis}, Vol.I, ed. Springer, 1972

\bibitem{PolMan98} A.D. Polyanin, A.V. Manzhirov, \textit{Handbook of Integral equations}, ed. CRC Press, 1998

\bibitem{Schlo97} M. Schlosser, \textit{Multidimensional Matrix Inversions and $A_r$ and $D_r$ Basic Hypergemeotric series}, The Ramanujan Journal, I, 
pp.243-274, 1997

\end{thebibliography}
\end{document}